\def\bE{{\mathbb{E}}}
\def\bR{{\mathbb{R}}}
\def\bT{{\mathbb{T}}}
\def\cF{{\mathcal{F}}}
\def\cJ{{\mathcal{J}}}
\def\ba{\boldsymbol{\alpha}}
\def\bep{\boldsymbol{\epsilon}}
\def\bbt{\boldsymbol{\beta}}
\def\bg{\boldsymbol{\gamma}}
\newcommand\zm{(\boldsymbol{0})}
\def\Hep{{\mathrm{H}}}
\newcommand\opr[1]{\mathrm{#1}}
\newcommand\mfk[1]{\mathfrak{#1}}
\def\ba{\boldsymbol{\alpha}}
\def\hker{\mathfrak{p}}
\newtheorem{theorem}{Theorem}
\theoremstyle{plain}
\newtheorem{corollary}[theorem]{Corollary}
\newtheorem{definition}[theorem]{Definition}
\newtheorem{proposition}[theorem]{Proposition}
\newtheorem{remark}[theorem]{Remark}
\newcommand\bel[1]{\begin{equation}\label{#1}}
\newcommand\ee{\end{equation}}
\numberwithin{theorem}{section}
\numberwithin{equation}{section}
\begin{document}
\title[Stationary PAM]
{Time-Homogeneous Parabolic Wick-Anderson Model in One Space Dimension:
Regularity of Solution}
\author{H.-J. Kim}
\curraddr[H.-J. Kim]{Department of Mathematics, USC\\
Los Angeles, CA 90089}
\email[H.-J. Kim]{kim701@usc.edu}
\urladdr{}
\author{S. V. Lototsky}
\curraddr[S. V. Lototsky]{Department of Mathematics, USC\\
Los Angeles, CA 90089}
\email[S. V. Lototsky]{lototsky@math.usc.edu}
\urladdr{http://www-bcf.usc.edu/$\sim$lototsky}

 \subjclass[2000]{Primary
60H15; Secondary 35R60, 60H40}

 \keywords{Malliavin Calculus, Space White Noise,
  Wick Product, Wiener Chaos}

\begin{abstract}
Even though  the heat equation with random potential is
a well-studied object, the particular case of time-independent
Gaussian white noise in one space dimension
has yet to receive the attention it deserves. The paper
 investigates  the stochastic heat
equation with space-only Gaussian white noise on a bounded
interval. The main result is that the
space-time regularity of the solution  is the same for  additive  noise
and  for multiplicative noise in the Wick-It\^{o}-Skorokhod interpretation.
\end{abstract}
\maketitle

\today

\section{Introduction}
\label{sec:Intro}

Consider the stochastic heat equation
\bel{eq000}
\frac{\partial u(t,x)}{\partial t} =
\frac{\partial^2 u(t,x)}{\partial x^2} +
u(t,x) \dot{W},
\ee
where  $\dot{W}$ is a Gaussian white noise.
Motivated by various applications in physics,  equation
\eqref{eq000} is often called parabolic Anderson model with
continuous time and space parameters.

If $W=W(t)$ is a Brownian motion in time, then,
with an It\^{o} interpretation,
a change of variables $u(t,x)=v(t,x)\exp(W(t)-(t/2))$ reduces
\eqref{eq000} to the usual heat equation $v_t=v_{xx}$.

If $W=W(t,x)$ is a two-parameter Brownian motion, or Brownian sheet,
then equation \eqref{eq000} has been studied in detail, from one of the original
references \cite[Chapter 3]{Walsh} to a more recent book \cite{DK14}.
 In particular, the It\^{o} interpretation  is the only option; cf. \cite{HP-WZ-SHE}.

If $W=W(x)$ is a Brownian motion in space, then equation \eqref{eq000}
has  two different interpretations:
\begin{enumerate}
\item Wick-It\^{o}-Skorokhod interpretation
\bel{eq000-sp1}
\frac{\partial u(t,x)}{\partial t} =
\frac{\partial^2 u(t,x)}{\partial x^2} +
u(t,x)\diamond \dot{W}(x),
\ee
where $\diamond$ is the Wick product;
 \item Stratonovich interpretation
\bel{eq000-sp2}
\frac{\partial u(t,x)}{\partial t} =
\frac{\partial^2 u(t,x)}{\partial x^2} +
u(t,x)\cdot\dot{W}(x),
\ee
where $u(t,x)\cdot\dot{W}(x)$ is understood  in the point-wise, or path-wise,
sense.
\end{enumerate}

In \cite{Hu02, Hu15}, equation
\eqref{eq000-sp1} is studied on the whole line  as a part of a more
general class of equations. Two  works dealing  specifically with
 \eqref{eq000-sp1} are \cite{UH96}, where the equation is considered on the whole line, and \cite{VStan-WN}, where the Dirichlet boundary value problem
is considered with a slightly more general random potential.

 According to \cite[Theorem 4.1]{UH96}, the solution of \eqref{eq000-sp1}
is almost H\"{o}lder(1/2) in time and space.
By comparison, the solution of \eqref{eq000-sp2}
is almost H\"{o}lder(3/4) in time  and almost H\"{o}lder(1/2)
in space \cite[Theorem 4.12]{Hu15}, whereas for the equation with
additive noise
$$
\frac{\partial u(t,x)}{\partial t} =
\frac{\partial^2 u(t,x)}{\partial x^2} +
 \dot{W}(x), \ t>0,\ x\in \bR, \ u(0,x)=0,
 $$
  the solution is  almost H\"{o}lder(3/4) in time and is almost
H\"{o}lder(3/2) in space, which follows by applying the
Kolmogorov continuity criterion to
$$
u(t,x)=\int_0^t\int_{\bR} \frac{1}{\sqrt{4\pi s}}\,e^{-(x-y)^2/(4s)}dW(y)ds.
$$

The objective of this paper is to establish optimal space-time
regularity of the solution of
\bel{eq:main}
\begin{split}
\frac{\partial u(t,x)}{\partial t} &=
\frac{\partial^2 u(t,x)}{\partial x^2} +
u(t,x)\diamond \dot{W}(x),\ t>0, \ 0<x<\pi,\\
u_x(t,0)&=u_x(t,\pi)=0, \ u(0,x)=u_0(x),
\end{split}
\ee
 and to  define and investigate the corresponding
  fundamental solution.
We show that the solution of \eqref{eq:main} is almost
H\"{o}lder(3/4) in time and is almost
H\"{o}lder(3/2) in space.
As a result, similar to the case of space-time white
noise, solutions of equations driven by either
additive or multiplicative Gaussian white noise
in space have the same regularity, justifying the optimality
claim in connection with \eqref{eq:main}.

Our analysis relies on the chaos expansion of the
solution and  the Kolmogorov continuity criterion.
Section \ref{sec:CSp} provides the necessary background about
chaos expansion and the Wick product. Section \ref{sec:CSol}
introduces the chaos solution of \eqref{eq:main}. Section
\ref{sec:RCS} establishes basic regularity  of the chaos solution as a
random variable and introduces the main tools necessary for the
proof of the main result. Section \ref{sec:AdN} establishes the
benchmark regularity result for the additive-noise version of
\eqref{eq:main}. The main results, namely,  H\"{o}lder
continuity of the chaos solution of \eqref{eq:main}
 in time and space, are in Sections
\ref{sec:Time} and \ref{sec:Space}, respectively. Section \ref{sec:FS}
is about the fundamental chaos solution of \eqref{eq:main}.
 Section \ref{sec:FD} discusses various  generalizations
 of \eqref{eq:main}, including other types of boundary
 conditions.

We  use the following notations:
$$
f_t(t,x)=\frac{\partial f(t,x)}{\partial t}, \ \
 f_x(t,x)=\frac{\partial f(t,x)}{\partial x}, \ \
 f_{xx}(t,x)=\frac{\partial^2 f(t,x)}{\partial x^2};
$$
$$
\bT^n_{s,t}=\left\{(s_1,\ldots,s_n)\in \bR^n:\
s < s_1 < s_{2} < \cdots
< s_n < t\right\}\!,
$$
$0\leq s<t,\ n=1,2,\ldots$;
$$
(g,h)_0=\int_0^{\pi} g(x)h(x)dx,\ \ \|g\|_0=\sqrt{(g,g)_0},\ \
g_k=(g,\mfk{m}_k)_0,
$$
where $\{\mfk{m}_k,\ k\geq 1\}$ is an orthonormal basis in $L_2((0,\pi))$;
$$
dx^n=dx_1dx_2\cdots dx_n.
$$

\section{The Chaos Spaces}
\label{sec:CSp}

Let $(\Omega, \cF, \mathbb{P})$ be a probability space.
A {\tt Gaussian white noise} $\dot{W}$
on $L_2((0,\pi))$ is a collection of  Gaussian random
variables $\dot{W}(h),\ h\in L_2((0,\pi)),$ such that
\begin{equation}
\label{dW0}
\bE \dot{W}(g)=0,\
\bE \Big( \dot{W}(g)\dot{W}(h)\Big)=(g,h)_0.
\end{equation}
For a Banach  space $X$,
denote by $L_p(W;X)$, $1\leq p<\infty$, the collection of
random elements $\eta$ that are measurable with respect to the
sigma-algebra generated by $\dot{W}(h),\  h\in L_2((0,\pi)),$ and
such that $\bE\|\eta\|_X^p<\infty$.

In what follows, we fix the Fourier cosine
basis $\{\mfk{m}_k,\ k\geq 1\}$ in $L_2((0,\pi))$:
\begin{equation}
\label{cos-basis}
\mfk{m}_1(x)=\frac{1}{\sqrt{\pi}},\ \
\mfk{m}_k(x)=\sqrt{\frac{2}{\pi}}\cos(kx),
\end{equation}
 and define
 \bel{xik}
 \xi_k=\dot{W}(\mfk{m}_k).
 \ee
By \eqref{dW0},  $\xi_k,\ k\geq 1,$ are iid standard Gaussian random variables,
and
 $$
 \dot{W}(h)=\sum_{k\geq 1} (\mfk{m}_k, h)_0\, \xi_k.
 $$
 As a result,
 \bel{WN}
 \dot{W}(x)=\sum_{k\geq 1} \mfk{m}_k(x) \xi_k
 \ee
 becomes an alternative notation for $\dot{W}$;
 of course, the series in \eqref{WN} diverges in
  the traditional sense.

 It follows from \eqref{dW0} that
 $W(x)=\dot{W}(\chi_{[0,x]}) $ is a standard
  Brownian motion on $[0,\pi]$, where
   $\chi_{[0,x]}$ is the indicator function of the interval $[0,x]$.

Denote by ${\mathcal{J}}$ the collection of multi-indices $\ba$
with $\ba=(\alpha_{1},\alpha_{2},\ldots)$
so that each $\alpha_{k}$ is a non-negative integer and
$|\ba|:=\sum_{k\geq1}\alpha_{k}<\infty$. For
$\ba,\bbt\in{\mathcal{J}}$, we define
\[
\ba+\bbt=(\alpha_{1}+\beta_{1},\alpha_{2}+\beta_{2},\ldots),\quad
\ba!=\prod_{k\geq1}\alpha_{k}!.
\]
Also,
\begin{itemize}
\item  $\zm$ is the multi-index with all zeroes;
\item  $\bep(i)$ is the multi-index $\ba$ with $\alpha_{i}=1$
 and $\alpha_{j}=0$ for $j\not=i$;
\item
$\ba-\bbt=(\max(\alpha_1-\beta_1,0),
\max(\alpha_2-\beta_2,0),\dots)$;
\item $\ba^-(i)=\ba-\bep(i)$.
\end{itemize}

An alternative way to describe a multi-index $\ba\in \cJ$
with $|\ba|=n>0$ is by its
{\tt characteristic set} $K_{\ba}$, that is, an ordered
$n$-tuple $K_{\ba}=\{k_{1},\ldots,k_{n}\}$,
 where $k_{1}\leq k_{2}\leq\ldots\leq k_{n}$
indicate the locations and the values of the
non-zero elements of $\ba$:
 $k_{1}$ is the index of the first non-zero element of
$\ba,$ followed by $\max\left(  0,\alpha_{k_{1}}-1\right)  $
 of entries with the same value.
 The next entry after that is the index of the second
non-zero element of $\ba$,
followed by $\max\left(  0,\alpha_{k_{2}}-1\right)  $
 of entries with the same value, and so on.
  For example, if $n=7$ and $\ba=(1,0,2,0,0,1,0,3,0,\ldots)$,
then the non-zero elements of
$\ba$ are $\alpha_{1}=1$,
$\alpha_{3}=2$, $\alpha_{6}=1$, $\alpha_{8}=3$, so
that
$K_{\ba}=\{1,3,3,6,8,8,8\}$:
$k_{1}=1,\,k_{2}=k_{3}=3,\,k_{4}=6,
k_{5}=k_{6}=k_{7}=8$.

Define the collection of random variables
 $\Xi=\{\xi_{\ba}, \ \ba \in{\mathcal{J}}\}$ by
\begin{equation*}
\label{eq:basis}
\xi_{\ba} = \prod_{k}
 \left(
 \frac{\Hep_{\alpha_{k}}(\xi_{k})}{\sqrt{\alpha_{k}!}}
  \right),
\end{equation*}
where $\xi_k$ is from \eqref{xik} and
\begin{equation}
\label{eq:hermite}
\Hep_{n}(x) = (-1)^{n} e^{x^{2}/2}\frac{d^{n}}{dx^{n}}%
e^{-x^{2}/2}%
\end{equation}
is the Hermite polynomial of order $n$.
By a theorem of Cameron and Martin \cite{CM},
$\Xi$ is an orthonormal basis in
$L_2(W;X)$ as long as $X$ is a Hilbert space.
Accordingly, in what follows, we always assume that $X$ is a
Hilbert space.

 For $\eta\in L_2(W;X)$, define
$\eta_{\ba}=\bE\big(\eta\xi_{\ba}\big)\in X$. Then
$$
\eta=\sum_{\ba\in\cJ} \eta_{\ba}\xi_{\ba},\
\bE\|\eta\|_X^2=\sum_{\ba\in\cJ}\|\eta_{\ba}\|_X^2.
$$

We will often need spaces other than $L_2(W;X)$:
\begin{itemize}
\item The space
$$
\mathbb{D}^{n}_2(W;X)=\Big\{ \eta=\sum_{\ba\in\cJ}\eta_{\ba}\xi_{\ba}\in
L_2(W;X):
\sum_{\ba\in\cJ}|\ba|^n\;\|\eta_{\ba}\|_X^2
<\infty\Big\},\ n>0;
$$
\item The space
$$
L_{2,q}(W;X)
=\Big\{ \eta=\sum_{\ba\in\cJ}\eta_{\ba}\xi_{\ba}\in
L_2(W;X): \sum_{\ba\in\cJ}q^{|\ba|}
\|\eta_{\ba}\|_X^2<\infty\Big\},\ q>1;
$$
\item The space $L_{2,q}(W;X)$,  $0<q<1$, which is the closure of
$L_2(W;X)$ with respect to the norm
$$
\|\eta\|_{L_{2,q}(X)}=\left(
\sum_{\ba\in\cJ}q^{|\ba|} \|\eta_{\ba}\|_X^2
\right)^{1/2}.
$$
\end{itemize}
It follows that
$$
L_{2,q_1}(W;X)\subset L_{2,q_2}(W;X),\ q_1>q_2,
$$
and,  for every $q>1$,
$$
L_{2,q}(W;X)\subset \bigcap_{n>0}\mathbb{D}^{n}_2(W;X).
$$
It is also known \cite[Section 1.2]{Nualart} that, for
$n=1,2,\ldots,$ the space
$\mathbb{D}^{n}_2(W;X)$ is the domain of $\mathbf{D}^n$,
the $n$-th power of the Malliavin derivative.

Here is another useful property of the spaces $L_{2,q}(W;X)$.

\begin{proposition}
\label{prop:LcInLp}
If $1<p<\infty$, and $q>p-1$, then
$$
L_{2,q}(W;X)\subset L_p(W;X).
$$
\end{proposition}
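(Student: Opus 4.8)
The plan is to deduce the embedding from Nelson's hypercontractivity theorem, which compares the $L_p$- and $L_2$-norms of a random variable lying in a single Wiener chaos. For $n\ge 0$ write $\eta^{(n)}=\sum_{|\ba|=n}\eta_{\ba}\xi_{\ba}$ for the projection of $\eta$ onto the $n$-th homogeneous chaos, so that $\eta=\sum_{n\ge 0}\eta^{(n)}$ and $\|\eta^{(n)}\|_{L_2(W;X)}^2=\sum_{|\ba|=n}\|\eta_{\ba}\|_X^2$. The key input is that, for $p\ge 2$ and $F$ in the $n$-th chaos,
\[
\|F\|_{L_p(W;X)}\le (p-1)^{n/2}\,\|F\|_{L_2(W;X)} .
\]
Because $X$ is a Hilbert space, this vector-valued version holds with the same constant as the scalar one; I would obtain it either by tensoring the Ornstein--Uhlenbeck semigroup with the identity on $X$, or by expanding $F$ in an orthonormal basis of $X$, applying the classical scalar inequality coordinatewise, and recombining through Minkowski's inequality in $L_{p/2}$ (legitimate since $p\ge 2$).

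Granting this, the case $p\ge 2$ (so that $q>p-1\ge 1$ and every $\eta^{(n)}$ is a genuine element of $L_2(W;X)$) is routine. By the triangle inequality in $L_p(W;X)$ and the per-chaos estimate,
\[
\Big\|\sum_{n=0}^{M}\eta^{(n)}\Big\|_{L_p(W;X)}\le\sum_{n=0}^{M}(p-1)^{n/2}\|\eta^{(n)}\|_{L_2(W;X)} ,
\]
and splitting $(p-1)^{n/2}=\big((p-1)/q\big)^{n/2}q^{n/2}$ before applying the Cauchy--Schwarz inequality in $n$ gives
\[
\sum_{n=0}^{M}(p-1)^{n/2}\|\eta^{(n)}\|_{L_2(W;X)}\le\Big(\sum_{n\ge 0}\big((p-1)/q\big)^{n}\Big)^{1/2}\Big(\sum_{n\ge 0}q^{n}\|\eta^{(n)}\|_{L_2(W;X)}^{2}\Big)^{1/2}.
\]
The first factor is a convergent geometric series exactly because $q>p-1$, and the second equals $\|\eta\|_{L_{2,q}(W;X)}$. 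Since the bound is uniform in $M$ and the same estimate applied to the tails $\sum_{M<n\le M'}\eta^{(n)}$ shows that the partial sums are Cauchy in $L_p(W;X)$, the series converges there; hence $\eta\in L_p(W;X)$ with $\|\eta\|_{L_p(W;X)}\le C(p,q)\,\|\eta\|_{L_{2,q}(W;X)}$.

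It remains to treat $1<p<2$, where $q>p-1$ may be smaller than $1$. When $q\ge 1$ there is nothing to do, since $L_{2,q}(W;X)\subset L_2(W;X)$ and $\|\cdot\|_{L_p}\le\|\cdot\|_{L_2}$ on a probability space. For $p-1<q<1$ the elements of $L_{2,q}(W;X)$ need no longer lie in $L_2(W;X)$, and the termwise bound above is too lossy; instead I would phrase the inclusion operator-theoretically, writing $\eta=\sum_{n}q^{-n/2}\zeta^{(n)}$ with $\zeta=\sum_{\ba}q^{|\ba|/2}\eta_{\ba}\xi_{\ba}\in L_2(W;X)$ and $\|\zeta\|_{L_2(W;X)}=\|\eta\|_{L_{2,q}(W;X)}$, so that $\eta=T_{q^{-1/2}}\zeta$ for the Mehler operator $T_r=\sum_n r^n(\text{projection onto the $n$-th chaos})$. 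The boundedness of $T_{q^{-1/2}}\colon L_2(W;X)\to L_p(W;X)$ for $q>p-1$ is again hypercontractivity, now in the regime $q^{-1/2}>1$; a coherent-state computation shows that $q>p-1$ is the sharp threshold. I expect the main obstacle to be exactly this hypercontractive estimate: the constant $(p-1)^{n/2}$ fails for general Banach targets, so the Hilbert structure of $X$ must be used, and for $1<p<2$ one must invoke the ``amplifying'' form of the inequality rather than the standard contraction-semigroup bound.
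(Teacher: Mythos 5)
For $p\ge 2$ your argument coincides with the paper's proof: project onto the homogeneous chaoses, apply the per-chaos bound $\|F\|_{L_p(W;X)}\le (p-1)^{n/2}\|F\|_{L_2(W;X)}$ (the Hilbert-space-valued form of Nelson's inequality), then the triangle inequality and Cauchy--Schwarz, with $q>p-1$ making the geometric series $\sum_n((p-1)/q)^n$ converge. Your additional remarks on why the vector-valued constant is the same and on the Cauchy property of the partial sums are correct and only make the argument more complete.

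Where you genuinely depart from the paper is in treating $1<p<2$ separately, and you are right to do so: the inequality $\|F\|_{L_p}\le(p-1)^{n/2}\|F\|_{L_2}$ fails in that range (take $n=1$, $F=\xi_1$: as $p\downarrow 1$ the left side tends to $\sqrt{2/\pi}$ while the right side tends to $0$), so the paper's one-line proof is really a proof only for $p\ge2$, and when $p-1<q<1$ the space $L_{2,q}(W;X)$ is strictly larger than $L_2(W;X)$, so the claim has content that the displayed argument does not reach. Your reduction of that subcase to the boundedness of $T_{q^{-1/2}}\colon L_2(W;X)\to L_p(W;X)$ is the right move, but as written it is the one unproven step: standard hypercontractivity concerns $T_r$ with $r\le1$ and does not directly give an amplifying bound. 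The gap is closed by duality. For $\zeta$ a finite chaos sum and $g\in L_{p'}(W;X)$ with $p'=p/(p-1)$, self-adjointness of $T_s$ on the chaos decomposition gives
\begin{equation*}
\bE\langle T_s\zeta,g\rangle_X=\bE\langle \zeta,T_sg\rangle_X
\le\|\zeta\|_{L_2(W;X)}\,\|T_sg\|_{L_2(W;X)}
\le\|\zeta\|_{L_2(W;X)}\,\|g\|_{L_{p'}(W;X)},
\end{equation*}
the last inequality being the standard contractive form $T_s\colon L_{p'}\to L_2$, valid for $s\le\sqrt{p'-1}=1/\sqrt{p-1}$; this applies to $s=q^{-1/2}$ precisely because $q>p-1$. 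Taking the supremum over $\|g\|_{L_{p'}}\le1$ and passing to the limit over finite sums finishes the subcase. With that paragraph inserted, your proof is complete and in fact covers the full range $1<p<\infty$ more carefully than the paper's own argument does.
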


\begin{proof}
Let $\eta\in L_{2,q}(W;X)$.
The hypercontractivity property of the Ornstein-Uhlenbeck
operator \cite[Theorem 1.4.1]{Nualart} implies\footnote{In fact,
a better reference is the un-numbered equation at the bottom of
page 62 in \cite{Nualart}.}
$$
\left(
\bE \left\|
\sum_{|\ba|=n} \eta_{\ba}\xi_{\ba} \right\|_X^p
\right)^{1/p} \leq
(p-1)^{n/2}\left(
\sum_{|\ba|=n}\|\eta_{\ba}\|_X^2
\right)^{1/2}.
$$
It remains to apply the triangle inequality, followed
by the Cauchy-Schwarz inequality:
$$
\Big(\bE\|\eta\|_X^p\Big)^{1/p}
\leq
\sum_{n=0}^{\infty}
(p-1)^{n/2}\left(\sum_{|\ba|=n}\|\eta_{\ba}\|_X^2\right)^{1/2}
\leq  \left(
\sum_{n=0}^{\infty}
\left(\frac{p-1}{q}\right)^n
\right)^{1/2}
\|\eta\|_{L_{2,q}(W;X)}.
$$
\end{proof}

\begin{definition}
\label{def:WP}
For $\eta \in L_2(W;X)$ and $\zeta\in L_2(W;\bR)$, the
{\tt Wick product} $\eta\diamond\zeta$ is defined by
\bel{eq:def-WP}
\big(\eta\diamond\zeta\big)_{\ba}
=\sum_{\bbt,\bg\in \cJ:\,\bbt+\bg=\ba}
\ \ \ \left(\frac{\ba!}{\bbt!\bg!}\right)^{1/2}
\eta_{\bbt}\,\zeta_{\bg}.
\ee
\end{definition}

To make sense of $\eta_{\bbt}\,\zeta_{\bg}$,
the definition requires at least one of $\eta,\zeta$ to be real-valued.
The normalization in \eqref{eq:def-WP}
ensures that, for every $n,m,k$,
$$
\Hep_n(\xi_k)\diamond \Hep_m(\xi_k)=\Hep_{n+m}(\xi_k),
$$
where $\xi_k$ is one of the
 standard Gaussian random variables \eqref{xik} and
 $\Hep_n$ is the Hermite polynomial \eqref{eq:hermite}.

 \begin{remark}
If $\eta\in L_2\big(W;L_2((0,\pi))\big)$ and
$\eta$ is adapted, that is,
for every $x\in [0,\pi]$, the random
variable  $\eta(x)$ is measurable with respect to the
sigma-algebra generated by
$\dot{W}(\chi_{[0,y]}), \ 0\leq y\leq x$, then,
 by \cite[Proposition 2.5.4 and Theorem 2.5.9]{HOUZ-2},
$$
\int_0^x \eta(x)\diamond \dot{W}(x)dx =
\int_0^x \eta(x)dW(x),
$$
where the right-hand side is
 the It\^{o} integral with respect to the standard Brownian
motion $W(x)=\dot{W}(\chi_{[0,x]})$.
This connection with the It\^{o} integral does not help
when it comes to  equation \eqref{eq:main}$:$
the structure of the heat kernel implies that, for every
$x\in (0,\pi)$,  the solution $u=u(t,x)$ of \eqref{eq:main}
depends on all of the trajectory
of $W(x),\ x\in (0,\pi),$ and therefore is not adapted as a function
of $x$.
\end{remark}

Given a fixed $\ba\in \cJ$, the sum in \eqref{eq:def-WP}
contains finitely many terms, but, in general,
$\sum_{\ba\in \cJ}
\Big\Vert\big(\eta\diamond\zeta\big)_{\ba}\Big\Vert_X^2
= \infty$ so that $\eta\diamond\zeta$ is not square-integrable.

 Here is  a sufficient condition for the Wick
product to be square-integrable.

\begin{proposition}
\label{prop:WP0}
If
\bel{eq:ex-zt}
\zeta=\sum_k b_k\xi_k,\ b_k\in \bR,
\ee
 and
$\sum_k b^2_k<\infty$, then
$\eta \ \ \mapsto \ \ \eta\diamond \zeta$
is a bounded linear operator from $\mathbb{D}^1_2(W;X)$
to $L_2(W;X)$.
\end{proposition}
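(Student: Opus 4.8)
The plan is to compute the chaos coefficients of $\eta\diamond\zeta$ explicitly from Definition \ref{def:WP} and then estimate them using the special structure of $\zeta$ as a first-chaos element. First I would observe that the hypothesis \eqref{eq:ex-zt} means $\zeta_{\bep(k)}=b_k$ for each $k$ and $\zeta_{\bg}=0$ whenever $|\bg|\neq 1$. Substituting this into \eqref{eq:def-WP}, only the terms with $\bg=\bep(k)$ survive; since $\ba!/\big((\ba-\bep(k))!\,\bep(k)!\big)=\alpha_k$, this collapses to the clean formula
$$
\big(\eta\diamond\zeta\big)_{\ba}
=\sum_{k:\,\alpha_k\geq 1}\sqrt{\alpha_k}\,b_k\,\eta_{\ba^-(k)}.
$$

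Next I would bound each coefficient in $X$. Applying the Cauchy--Schwarz inequality to the sum above and using $\sum_{k:\,\alpha_k\geq1}\alpha_k=|\ba|$ gives
$$
\big\|\big(\eta\diamond\zeta\big)_{\ba}\big\|_X^2
\leq |\ba|\sum_{k:\,\alpha_k\geq1}b_k^2\,\|\eta_{\ba^-(k)}\|_X^2.
$$
Summing over all $\ba\in\cJ$ and interchanging the order of summation, I would reindex, for each fixed $k$, by $\bbt=\ba-\bep(k)=\ba^-(k)$: as $\ba$ runs over the multi-indices with $\alpha_k\geq1$, the index $\bbt$ runs over all of $\cJ$ and $|\ba|=|\bbt|+1$. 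This produces
$$
\bE\|\eta\diamond\zeta\|_X^2
=\sum_{\ba\in\cJ}\big\|\big(\eta\diamond\zeta\big)_{\ba}\big\|_X^2
\leq\Big(\sum_k b_k^2\Big)\sum_{\bbt\in\cJ}\big(|\bbt|+1\big)\|\eta_{\bbt}\|_X^2.
$$

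Finally I would read off boundedness: the factor $\sum_k b_k^2$ is finite by assumption, while $\sum_{\bbt}(|\bbt|+1)\|\eta_{\bbt}\|_X^2$ is finite precisely because $\eta\in\mathbb{D}^1_2(W;X)$ (it is the sum of the $L_2$ norm and the defining $\mathbb{D}^1_2$ seminorm of $\eta$). Linearity of $\eta\mapsto\eta\diamond\zeta$ is immediate from \eqref{eq:def-WP}. I expect the only point needing care to be the interchange of the two infinite sums and the reindexing bookkeeping; since every term is nonnegative, Tonelli's theorem makes the interchange rigorous, so the substantive content is really just the combinatorial identity $\ba!/((\ba-\bep(k))!\,\bep(k)!)=\alpha_k$ together with the observation that the weights $\sqrt{\alpha_k}$ square-sum to $|\ba|$. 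That single factor of $|\ba|$ is exactly why the natural domain is $\mathbb{D}^1_2(W;X)$ rather than $L_2(W;X)$: Wick multiplication by a first-chaos element shifts mass up one chaos level and costs one power of $|\ba|$.
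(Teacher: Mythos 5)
Your proposal is correct and follows essentially the same route as the paper's proof: the explicit first-chaos formula $\big(\eta\diamond\zeta\big)_{\ba}=\sum_k\sqrt{\alpha_k}\,b_k\,\eta_{\ba^-(k)}$, the Cauchy--Schwarz estimate with the factor $|\ba|=\sum_k\alpha_k$, and the index shift $\ba\mapsto\ba^-(k)$ producing the bound $\big(\sum_k b_k^2\big)\sum_{\ba}\big(|\ba|+1\big)\|\eta_{\ba}\|_X^2$. You merely make explicit two points the paper leaves implicit, namely the multinomial identity $\ba!/\big((\ba-\bep(k))!\,\bep(k)!\big)=\alpha_k$ and the Tonelli justification for interchanging the sums.
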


\begin{proof}
By \eqref{eq:def-WP},
$$
\bE\|\eta\diamond \zeta\|_X^2 =
\sum_{\ba\in\cJ}
\left\Vert \sum_k \sqrt{\alpha_k}\;
b_k \eta_{\ba^-(k)}\right\Vert^2_X.
$$
By the Cauchy-Schwarz inequality,
$$
\left\Vert \sum_k \sqrt{\alpha_k}\,
b_k \eta_{\ba^-(k)}\right\Vert^2_X
\leq
|\ba|\sum_k b_k^2 \|\eta_{\ba^-(k)}\|_X^2.
$$
After summing over all $\ba$ and shifting the summation index,
$$
\bE\|\eta\diamond \zeta\|_X^2 \leq
\Big(\sum_k b_k^2 \Big)
\sum_{\ba\in \cJ}\big(|\ba|+1\big)\|\eta_{\ba}\|_X^2,
$$
concluding the proof.
\end{proof}

Note that, while $\dot{W}(x)$ is of the
form \eqref{eq:ex-zt} (cf. \eqref{WN}),
Proposition \ref{prop:WP0} does not apply: for a typical
value of $x\in [0,\pi]$,  $\sum_k |\mfk{m}_k(x)|^2=+\infty$.
Thus, without either adaptedness of $\eta$ or square-integrability
of $\dot{W}$, an investigation of  the Wick
product $\eta\diamond \dot{W}(x)$ requires  additional constructions.

One approach (cf. \cite{LR-spn})  is to note that if \eqref{eq:ex-zt}
 is a  linear combination of $\xi_k$,
then, by  \eqref{eq:def-WP}, the number
$$
(\eta\diamond \zeta)_{\ba}=
\sum_k \sqrt{\alpha_k}\,b_k\eta_{\ba^-(k)}
$$
is well-defined for every $\ba\in \cJ$ regardless of
whether the series $\sum_k b_k^2$ converges or diverges.
This observation allows an extension of
 the operation $\diamond$  to
spaces much bigger than $L_2(W;X)$ and $L_2(W;\bR)$;
see \cite[Proposition 2.7]{LR-spn}. In particular, both $\dot{W}$ and
$\eta\diamond\dot{W}$, with
\bel{wp-a}
\Big(\eta\diamond\dot{W}\Big)_{\ba}=
\sum_k \sqrt{\alpha_k}\,\mfk{m}_k\eta_{\ba^-(k)},
\ee
 become generalized random elements with
values in $L_2((0,\pi))$.

An alternative approach, which we will pursue in this paper,
 is to consider $\dot{W}$ and
$\eta\diamond\dot{W}$ as usual (square integrable) random elements
with values in a space of generalized functions.

For $\gamma\in \bR$, define the operator
\bel{LambdaOp}
\Lambda^{\gamma}
=\left(I-\frac{\partial^2}{\partial x^2}\right)^{\gamma/2}
\ee
on $L_2((0,\pi))$ by
\bel{Ldg}
\big(\Lambda^{\gamma} f\big)(x)=
\sum_{k=1}^{\infty} \big(1+(k-1)^{2}\big)^{\gamma/2}
 f_k\mfk{m}_k(x),
\ee
where, for a smooth $f$ with compact support in $(0,\pi)$,
$$
f_k=\int_0^{\pi} f(x)\mfk{m}_k(x)dx;
$$
recall that $\{\mfk{m}_k,\ k\geq 1\}$ is the Fourier cosine
basis \eqref{cos-basis} in $L_2((0,\pi))$ so that
$$
\Lambda^2 \mfk{m}_k(x)=\mfk{m}_k(x)+\mfk{m}''_k(x)
=\big(1+(k-1)^2\big)\mfk{m}_k(x).
$$

If $\gamma>1/2$, then, by \eqref{Ldg},
 \bel{Ker-gm}
\big( \Lambda^{-\gamma}f\big)(x)=
\int_0^{\pi} R_{\gamma}(x,y)f(y)dy,
\ee
where
\bel{Ker-gmG}
R_{\gamma}(x,y)=\sum_{k\geq 1}\big(1+(k-1)^2\big)^{-\gamma/2}
\mfk{m}_k(x)\mfk{m}_k(y).
\ee

\begin{definition}
\label{def:SobSp}
 The Sobolev space $H^{\gamma}_2((0,\pi))$ is
$\Lambda^{-\gamma}\Big(L_2((0,\pi))\Big)$.
The norm $\|f\|_{\gamma}$ in the space is defined by
$$
\|f\|_{\gamma}=\|\Lambda^{\gamma}f\|_{0}.
$$
\end{definition}

The next result is a variation on the theme of
Proposition \ref{prop:WP0}.

\begin{theorem}
\label{th:WP-main}
If $\gamma>1/2,$
then $\eta\mapsto \eta\diamond \dot{W}$
is a bounded linear operator from
 $\mathbb{D}^1_2\big(W;L_2((0,\pi))\big)$ to
$L_2\big(W;H^{-\gamma}_2((0,\pi))\big)$.
\end{theorem}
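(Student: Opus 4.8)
The plan is to prove the operator bound at the level of chaos coefficients, reducing the whole statement to a single scalar estimate about multiplication by the basis functions $\mfk{m}_k$. Since $\eta\mapsto\eta\diamond\dot{W}$ is linear, it suffices to produce a constant $C_\gamma$ (depending only on $\gamma$) with $\bE\|\eta\diamond\dot{W}\|_{-\gamma}^2\le C_\gamma\,\|\eta\|_{\mathbb{D}^1_2(W;L_2((0,\pi)))}^2$. By orthonormality of the chaos basis $\Xi$ and the definition of the $H^{-\gamma}_2$-norm,
$$
\bE\big\|\eta\diamond\dot{W}\big\|_{-\gamma}^2=\sum_{\ba\in\cJ}\big\|(\eta\diamond\dot{W})_\ba\big\|_{-\gamma}^2=\sum_{\ba\in\cJ}\sum_{j\ge1}\big(1+(j-1)^2\big)^{-\gamma}\big((\eta\diamond\dot{W})_\ba,\mfk{m}_j\big)_0^2,
$$
into which I would insert the explicit coefficients $(\eta\diamond\dot{W})_\ba=\sum_k\sqrt{\alpha_k}\,\mfk{m}_k\eta_{\ba^-(k)}$ from \eqref{wp-a}.

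First I would apply the Cauchy--Schwarz inequality to the inner sum over $k$, using $\sum_k\alpha_k=|\ba|$, to get $\big((\eta\diamond\dot{W})_\ba,\mfk{m}_j\big)_0^2\le|\ba|\sum_{k:\alpha_k\ge1}\big(\mfk{m}_k\eta_{\ba^-(k)},\mfk{m}_j\big)_0^2$; summing over $j$ and recognizing the weighted sum as a $\|\cdot\|_{-\gamma}$-norm yields $\|(\eta\diamond\dot{W})_\ba\|_{-\gamma}^2\le|\ba|\sum_{k:\alpha_k\ge1}\|\mfk{m}_k\eta_{\ba^-(k)}\|_{-\gamma}^2$. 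Next I would change the summation variable by $\bbt=\ba-\bep(k)$ (so $\ba=\bbt+\bep(k)$ and $|\ba|=|\bbt|+1$), under which $(\ba,k)$ with $\alpha_k\ge1$ is in bijection with $(\bbt,k)$, $k\ge1$ arbitrary. This turns the double sum into
$$
\bE\big\|\eta\diamond\dot{W}\big\|_{-\gamma}^2\le\sum_{\bbt\in\cJ}(|\bbt|+1)\sum_{k\ge1}\big\|\mfk{m}_k\eta_\bbt\big\|_{-\gamma}^2.
$$
Everything now reduces to the scalar claim that there is $C_\gamma$ with $\sum_{k\ge1}\|\mfk{m}_k f\|_{-\gamma}^2\le C_\gamma\|f\|_0^2$ for all $f\in L_2((0,\pi))$; granting this and recalling that $\sum_{\bbt}(1+|\bbt|)\|\eta_\bbt\|_0^2$ is the squared $\mathbb{D}^1_2(W;L_2((0,\pi)))$-norm, the theorem follows at once.

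The crux is this scalar estimate, and I would emphasize that the naive termwise bound $\|\mfk{m}_k f\|_{-\gamma}\le\|\mfk{m}_k f\|_0\le\sqrt{2/\pi}\,\|f\|_0$ is useless: being uniform in $k$, it makes $\sum_{k}$ diverge, so the decay must be harvested only \emph{after} summing in $k$. To do this I would interchange the nonnegative sums and, for fixed $j$, write $(\mfk{m}_k f,\mfk{m}_j)_0=(f\mfk{m}_j,\mfk{m}_k)_0$ and invoke Parseval in $k$:
$$
\sum_{k\ge1}\big\|\mfk{m}_k f\big\|_{-\gamma}^2=\sum_{j\ge1}\big(1+(j-1)^2\big)^{-\gamma}\sum_{k\ge1}\big(f\mfk{m}_j,\mfk{m}_k\big)_0^2=\sum_{j\ge1}\big(1+(j-1)^2\big)^{-\gamma}\big\|f\mfk{m}_j\big\|_0^2.
$$
Since $\mfk{m}_j^2\le 2/\pi$ pointwise, $\|f\mfk{m}_j\|_0^2\le(2/\pi)\|f\|_0^2$, so the right-hand side is at most $(2/\pi)\|f\|_0^2\sum_{j\ge1}(1+(j-1)^2)^{-\gamma}$. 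This is exactly where the hypothesis enters: the series $\sum_{j\ge1}(1+(j-1)^2)^{-\gamma}=\sum_{m\ge0}(1+m^2)^{-\gamma}$ converges precisely when $\gamma>1/2$, giving $C_\gamma=(2/\pi)\sum_{m\ge0}(1+m^2)^{-\gamma}$. I expect this convergence threshold to be the one genuine obstacle (really the structural heart of the argument), and I would point out that it is the same threshold that guarantees $\dot{W}=\sum_k\mfk{m}_k\xi_k$ itself belongs to $L_2(W;H^{-\gamma}_2)$, since $\bE\|\dot{W}\|_{-\gamma}^2=\sum_k\|\mfk{m}_k\|_{-\gamma}^2=\sum_k(1+(k-1)^2)^{-\gamma}$.
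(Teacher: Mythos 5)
Your proposal is correct and follows essentially the same route as the paper's proof: Cauchy--Schwarz in $k$ using $\sum_k\alpha_k=|\ba|$, the index shift $\ba\mapsto\ba-\bep(k)$, and then the scalar estimate $\sum_k\|\mfk{m}_k f\|_{-\gamma}^2\leq C_\gamma\|f\|_0^2$ via Parseval and the convergence of $\sum_{m\geq 0}(1+m^2)^{-\gamma}$ for $\gamma>1/2$. The only (cosmetic) difference is that you carry out the Parseval step directly on Fourier coefficients, whereas the paper phrases the identical computation through the integral kernel $R_\gamma(x,y)$ of $\Lambda^{-\gamma}$ and the bound $\int_0^\pi R_\gamma^2(x,y)\,dx\leq\frac{2}{\pi}\sum_{k\geq 0}(1+k^2)^{-\gamma}$.
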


\begin{proof}
By \eqref{wp-a}, if
$$
\eta=\sum_{\ba\in\cJ}\eta_{\ba}\xi_{\ba}
$$
with $\eta_{\ba}\in L_2((0,\pi))$, then
$$
\big(\eta\diamond \dot{W}\big)(x)=
\sum_{\ba\in\cJ}\left(\sum_k
\sqrt{\alpha_k} \mfk{m}_k(x)\eta_{\ba^-(k)}(x)\right)
\xi_{\ba},
$$
so that
$$
\bE\|\eta\diamond \dot{W}\|_{-\gamma}^2
=
\sum_{\ba\in\cJ}\left\Vert
\sum_k \sqrt{\alpha_k}
\Lambda^{-\gamma}\big(\mfk{m}_k\eta_{\ba^-(k)}\big)
\right\Vert_{0}^2.
$$
By the Cauchy-Schwarz inequality,
$$
\left\Vert
\sum_k \sqrt{\alpha_k}
\Lambda^{-\gamma}\big(\mfk{m}_k\eta_{\ba^-(k)}\big)
\right\Vert_{0}^2
\leq
|\ba|\sum_{k}\int_0^{\pi}
\Big(\Lambda^{-\gamma}\big(\mfk{m}_k\eta_{\ba^-(k)}\big)
\Big)^2(x)dx.
$$
After summing over all $\ba$ and shifting the summation
index,
$$
\bE\|\eta\diamond \dot{W}\|_{-\gamma}^2
\leq
\sum_{\ba\in\cJ} \big( |\ba|+1 \big)
\sum_{k}\int_0^{\pi}
\Big(\Lambda^{-\gamma}\big(\mfk{m}_k\eta_{\ba}\big)
\Big)^2(x)dx.
$$
By \eqref{Ker-gm} and Parsevals's equality,
$$
\sum_{k}\int_0^{\pi}
\Big(\Lambda^{-\gamma}\big(\mfk{m}_k\eta_{\ba}\big)
\Big)^2(x)dx=
\int_0^{\pi}\int_0^{\pi}R_{\gamma}^2(x,y)\eta_{\ba}^2(y)dydx,
$$
 and then  \eqref{Ker-gmG} implies
 $$
 \int_0^{\pi}R_{\gamma}^2(x,y)dx = \sum_{k\geq 1}
\big(1+  (k-1)^{2}\big)^{-\gamma} \mfk{m}_k^2(y)
\leq \frac{2}{\pi}\sum_{k\geq 0}\frac{1}{(1+k^2)^{\gamma}},
 $$
 that is,
 $$
 \int_0^{\pi}\int_0^{\pi}R_{\gamma}^2(x,y)\eta_{\ba}^2(y)dydx
 \leq C_{\gamma} \|\eta_{\ba}\|^2_0,\ \ \
 C_{\gamma}=\frac{2}{\pi} \sum_{k\geq 0}\frac{1}{(1+k^2)^{\gamma}}.
 $$
 As a result,
$$
\bE\|\eta\diamond \dot{W}\|_{-\gamma}^2
\leq C_{\gamma}\sum_{\ba\in \cJ}\big(|\ba|+1\big)
\|\eta_{\ba}\|^2_{0},
$$
concluding the proof of Theorem \ref{th:WP-main}.
\end{proof}

\section{The Chaos Solution}
\label{sec:CSol}

Let $(V,H,V')$ be a normal triple of Hilbert spaces,
that is
\begin{itemize}
\item  $V\subset H\subset V^{\prime}$ and the embeddings $V\subset H$ and
$H\subset V^{\prime}$ are dense and continuous;
\item  The space $V^{\prime}$ is dual to $V$ relative to the inner product
in $H;$
\item  There exists a constant $C_H>0$ such that
$\left\vert (u,v)_{H}\right\vert
\leq C_H\left\Vert u\right\Vert _{V}\left\Vert v\right\Vert
_{V^{\prime}}$ for all $u\in V$ and $v\in H.$
\end{itemize}
An abstract homogeneous Wick-It\^{o}-Skorohod
evolution equation in $(V,H,V')$,
driven by the collection $\{\xi_k,\ k\geq 1\}$ of iid standard
Gaussian random variables, is
\bel{eq:gen}
\dot{u}(t)=\opr{A}u(t)+\sum_k \opr{M}_k u(t) \diamond \xi_k,\
t>0,
\ee
where $\opr{A}$ and $\opr{M}_k$ are bounded linear
operators from $V$ to $V'$. Except for Section \ref{sec:FS},
everywhere else in the paper,
the initial condition $u(0)\in H$ is  non-random.

\begin{definition}
\label{def:CS}
The {\tt chaos solution} of
\eqref{eq:gen} is the collection of functions
$\{u_{\ba}=u_{\ba}(t),\ t>0, \ \ba\in \cJ\}$
satisfying the {\tt propagator}
\begin{equation*}
\begin{split}
\dot{u}_{\zm}(t)&=\opr{A}u_{\zm},\
u_{\zm}(0)=u(0),\\
\dot{u}_{\ba}&=
\opr{A}u_{\ba}+\sum_k \sqrt{\alpha_k}
\opr{M}_ku_{\ba^-(k)},\ u_{\ba}(0)=0,\ |\ba|>0.
\end{split}
\end{equation*}
\end{definition}

It is known \cite[Theorem 3.10]{LR-spn} that if
the deterministic equation $\dot{v}=\opr{A}v$ is
well-posed in $(V,H,V')$, then \eqref{eq:gen} has a unique
chaos solution
\begin{equation}
\label{eq:ua-gen}
\begin{split}
u_{\ba}\left(  t\right)  =\frac{1}{\sqrt{\ba!}}
&\sum_{\sigma
\in{\mathcal{P}}_{n}}\int_{0}^{t}
\int_{0}^{s_{n}}\ldots\int_{0}^{s_{2}}\\
& \Phi_{t-s_{n}}{\opr{M}}_{k_{\sigma(n)}}
\cdots\Phi_{s_{2}-s_{1}}{\opr{M}}_{k_{\sigma(1)}}
\Phi_{s_1}u_0 \, ds_{1}\ldots ds_{n},
\end{split}
\end{equation}
where
\begin{itemize}
\item ${\mathcal{P}}_{n}$ is
the permutation group of the set $(1,\ldots, n)$;
\item $K_{\alpha}=\{k_{1},\ldots,k_{n}\}$ is the
characteristic set of $\ba$;
\item $\Phi_{t}$ is the semigroup generated by ${\opr{A}}$:
 $u_{\zm}(t)=\Phi_{t}u_{0}$.
\end{itemize}
Once constructed, the chaos solution does not depend on the
particular choice of the basis in $L_2(W;H)$
\cite[Theorem 3.5]{LR-spn}. In general, though,
$$
\sum_{\ba\in\cJ} \|u_{\ba}(t)\|_H^2 =\infty,
$$
that is, the chaos solution belongs to a space that is bigger than
$L_2(W;H)$; cf. \cite[Remark 3.14]{LR-spn}.

On the one hand, equation \eqref{eq:main}
 is a particular case of \eqref{eq:gen}:
 $\opr{A}f(x)=f''(x)$  with zero Neumann boundary conditions,
 $\opr{M}_kf(x)=\mfk{m}_k(x)f(x)$,
 $H=L_2((0,\pi)),$ $V=H^{1}((0,\pi))$, $V'=H^{-1}((0,\pi))$.
 The corresponding propagator becomes
 \bel{eq:ppgA}
 \begin{split}
 \frac{\partial {u}_{\zm}(t,x)}{\partial t}&=
 \frac{\partial^2{u}_{\zm}(t,x)}{\partial x^2} ,\
u_{\zm}(0,x)=u_0(x),\\
\frac{\partial{u}_{\ba}(t,x)}{\partial t}&=
\frac{\partial^2 u_{\ba}(t,x)}{\partial x^2}
+\sum_k \sqrt{\alpha_k}
\mfk{m}_k(x)u_{\ba^-(k)}(t,x),\ u_{\ba}(0,x)=0,\ |\ba|>0.
\end{split}
 \ee

 Then existence and uniqueness of the chaos solution
 of \eqref{eq:main} are immediate:
 \begin{proposition}
 \label{prop:CS-gen}
 If $u_0\in L_2((0,\pi))$, then equation \eqref{eq:main},
 considered in the normal triple
 $\Big(H^1((0,\pi)), L_2((0,\pi)), H^{-1}((0,\pi))\Big),$
 has a unique chaos solution.
\end{proposition}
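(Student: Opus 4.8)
The plan is to obtain the proposition as a direct application of the abstract existence-and-uniqueness result \cite[Theorem 3.10]{LR-spn} stated above, whose sole hypothesis is well-posedness of the deterministic equation $\dot v=\opr{A}v$ in the normal triple at hand. The task therefore reduces to checking that \eqref{eq:main}, written as \eqref{eq:gen} with $\opr{A}f=f''$ (zero Neumann boundary conditions), $\opr{M}_kf=\mfk{m}_kf$, $H=L_2((0,\pi))$, $V=H^1((0,\pi))$, and $V'=H^{-1}((0,\pi))$, meets the structural requirements of \eqref{eq:gen}, and that $\dot v=\opr{A}v$ is well-posed in $(V,H,V')$.

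First I would verify that $(V,H,V')$ is a normal triple and that $\opr{A}$ and every $\opr{M}_k$ are bounded from $V$ into $V'$. For $\opr{A}$, integration by parts together with the Neumann boundary conditions gives $\dual{\opr{A}u}{v}=-(u',v')_0$, so that $|\dual{\opr{A}u}{v}|\le\|u\|_V\|v\|_V$ and $\opr{A}$ extends to a bounded operator $V\to V'$. For $\opr{M}_k$, multiplication by the smooth, uniformly bounded function $\mfk{m}_k$ is bounded on $H=L_2((0,\pi))$; composing with the continuous embeddings $V\hookrightarrow H\hookrightarrow V'$ shows that $\opr{M}_k\colon V\to V'$ is bounded as well.

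Next I would establish well-posedness of $\dot v=\opr{A}v$ by the standard variational route, namely a coercivity (G\aa rding-type) estimate. Since the Neumann boundary terms vanish, $-\dual{\opr{A}u}{u}=\|u'\|_0^2$, and hence
$$
-\dual{\opr{A}u}{u}+\|u\|_0^2=\|u'\|_0^2+\|u\|_0^2=\|u\|_V^2,
$$
so coercivity holds with both constants equal to $1$; combined with the boundedness from the previous step, this yields a unique variational solution and the associated strongly continuous semigroup $\Phi_t$. Equivalently, one may display $\Phi_t$ explicitly on the cosine basis, $\Phi_t\mfk{m}_k=e^{-(k-1)^2t}\mfk{m}_k$, using that each $\mfk{m}_k$ is an eigenfunction of the Neumann Laplacian with the eigenvalues $-(k-1)^2$ appearing in \eqref{Ldg}; this identifies $\Phi_t$ with the semigroup in the propagator \eqref{eq:ppgA}.

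Once these hypotheses are in place, \cite[Theorem 3.10]{LR-spn} yields at once a unique chaos solution of \eqref{eq:main}, with the coefficients $u_{\ba}$ given coordinate-wise by \eqref{eq:ua-gen}. I do not anticipate a genuine obstacle here: the argument is a verification that the abstract machinery applies, and the only point demanding even minimal care is the coercivity estimate for the Neumann Laplacian, which is immediate from the integration-by-parts identity above.
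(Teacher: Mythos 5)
Your proposal is correct and follows the same route as the paper: the paper's entire proof is a citation of \cite[Theorem 3.10]{LR-spn}, and you apply exactly that theorem. The only difference is that you spell out the verification of its hypotheses (boundedness of $\opr{A}$ and $\opr{M}_k$ from $V$ to $V'$ and coercivity of the Neumann Laplacian), which the paper leaves implicit; these checks are accurate.
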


\begin{proof} This follows from
 \cite[Theorems 3.10]{LR-spn}.
 \end{proof}

 On the other hand, equation \eqref{eq:main} has two important
 features that are, in general, not present in \eqref{eq:gen}:
 \begin{itemize}
 \item The semigroup $\Phi_t$ has a kernel $\hker(t,x,y)$:
 \bel{eq:kernel}
 \Phi_tf(x)=\int_0^{\pi}\hker(t,x,y)f(y)dy,\ t>0,
 \ee
 where
 \bel{eq:kernel1}
 \hker(t,x,y)=\sum_{k\geq 1} e^{-(k-1)^2t}\mfk{m}_k(x)\mfk{m}_k(y)=
 \frac{1}{\pi}+\frac{2}{\pi}
 \sum_{k=1}^{\infty}e^{-k^2t}\cos(kx)\cos(ky).
 \ee
\item By Parseval's equality,
\bel{eq:Parseval}
\sum_k \left(\int_0^{\pi} f(x)\mfk{m}_k(x)dx\right)^2=
\int_0^{\pi} f^2(x)dx.
\ee
 \end{itemize}

 In fact, the properties of
the chaos solution of \eqref{eq:main} are closely connected with the properties of
the function $\hker(t,x,y)$ from \eqref{eq:kernel1}. Below are some of the properties we will need.
\begin{proposition}
\label{prop:hker}
For $t>0$ and $x,y\in [0,\pi]$,
\begin{align}
\label{eq:hker0}
&0\leq \hker(t,x,y)\leq \frac{\sqrt{t}+1}{\sqrt{t}},\\
\notag
&|\hker_x(t,x,y)|\leq \frac{4}{t},\ \
|\hker_{xx}(t,x,y)|\leq \frac{27}{t^{3/2}},\ \
|\hker_t(t,x,y)|\leq \frac{27}{t^{3/2}}.
\end{align}
\end{proposition}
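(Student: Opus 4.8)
The plan is to convert the spectral (cosine-series) representation \eqref{eq:kernel1} of $\hker$ into a method-of-images representation via the Poisson summation formula, and then to read off each bound from whichever representation is better adapted to the size of $t$. Using $\cos(kx)\cos(ky)=\tfrac12\big(\cos(k(x-y))+\cos(k(x+y))\big)$ and setting $q(t,z)=\frac{1}{2\pi}\sum_{k\in\mathbb{Z}}e^{-k^2t}e^{ikz}$, one has $\hker(t,x,y)=q(t,x-y)+q(t,x+y)$. Poisson summation applied to $q$ produces the dual Gaussian form
\[
q(t,z)=\frac{1}{\sqrt{4\pi t}}\sum_{n\in\mathbb{Z}}e^{-(z-2\pi n)^2/(4t)},
\]
so that $\hker(t,x,y)$ is an explicit, manifestly nonnegative sum of Gaussians in $x\pm y$; the lower bound $0\le\hker$ is then immediate. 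For the upper bound on $\hker$ itself I would use this Gaussian form for all $t$: the summand is unimodal in $n$, so the elementary ``supremum plus integral'' estimate gives $\sum_n e^{-(z-2\pi n)^2/(4t)}\le 1+\sqrt{t/\pi}$, whence $\hker(t,x,y)\le \frac{1}{\sqrt{\pi t}}+\frac1\pi\le \frac{\sqrt t+1}{\sqrt t}$.

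Before estimating the derivatives I would record that every mode $e^{-k^2t}\cos(kx)$ solves the one-dimensional heat equation, so $\hker_t=\hker_{xx}$; this makes the bounds on $\hker_t$ and $\hker_{xx}$ literally the same (which is why they share the constant $27$), and reduces everything to controlling $\hker_x=q'(t,x-y)+q'(t,x+y)$ and $\hker_{xx}=q''(t,x-y)+q''(t,x+y)$.

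These two quantities I would estimate by splitting on the size of $t$. For small $t$ I differentiate the Gaussian representation: $\partial_z^j q$ equals $\frac{1}{\sqrt{4\pi t}}$ times a sum of terms of the form (a polynomial in $(z-2\pi n)/\sqrt{t}$) $\times\,e^{-(z-2\pi n)^2/(4t)}$, and the nearest-image term plus the exponentially small tail is dominated by $\sup_w|w|^je^{-w^2/(4t)}$, which scales like $t^{j/2}$; after accounting for the prefactors $(2t)^{-1}$ and $(4t^2)^{-1}$ this yields the orders $t^{-1}$ for $\hker_x$ and $t^{-3/2}$ for $\hker_{xx}$. For large $t$ I instead differentiate the cosine series term by term, so that $|\hker_x|\le\frac2\pi\sum_{k\ge1}ke^{-k^2t}$ and $|\hker_{xx}|\le\frac2\pi\sum_{k\ge1}k^2e^{-k^2t}$; these series decay like $e^{-t}$ and are therefore far below the required $t^{-1}$ and $t^{-3/2}$.

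The main obstacle is not any individual estimate but the uniformity in $t$ together with the explicit constants $4$ and $27$, which forces a careful matching of the two regimes at a crossover value $t_\ast$. Neither representation works alone: for large $t$ the triangle inequality applied to the image-series derivatives destroys the exponentially small cancellation (the sum $\sum_n|z-2\pi n|e^{-(z-2\pi n)^2/(4t)}$ is of order $t$, not exponentially small) and leaves a useless bound of order $t^{-1/2}$; and the crude integral comparison for $\sum_k k^je^{-k^2t}$ contributes a maximum-term of order $t^{-j/2}$, which exceeds the target order $t^{-(j+1)/2}$ as soon as $t$ is bounded away from $0$. The remedy for large $t$ is precisely to retain the factor $e^{-t}$ from the leading mode, and the constants $4$ and $27$ then emerge from bookkeeping the small-$t$ Gaussian estimates and verifying that the large-$t$ tail stays under the same envelope at and beyond $t_\ast$.
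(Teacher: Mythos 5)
Your proposal is correct in outline, but it takes a genuinely different route from the paper's. The paper obtains $0\le\hker$ from the maximum principle and derives every upper bound from the single integral-comparison estimate \eqref{IntComp-g}, $\sum_{k\ge1}k^re^{-k^2t}\le(r/(2t))^{(r+1)/2}+\int_0^\infty x^re^{-x^2t}\,dx\le(r+1)^{r+1}t^{-(r+1)/2}$, applied with $r=1$ and $r=2$ to the term-by-term differentiated cosine series \eqref{eq:kernel1}; the constants $4=2^2$ and $27=3^3$ fall out of that formula with no splitting into regimes. You instead get positivity from Poisson summation (more explicit and self-contained than invoking the maximum principle) and prove the derivative bounds by matching a small-$t$ Gaussian-image estimate against a large-$t$ exponential-decay estimate. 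What your route buys is an honest treatment of large $t$: your observation that the maximum term in the naive ``supremum plus integral'' comparison for $\sum_k k^re^{-k^2t}$ is of order $t^{-r/2}$ rather than $t^{-(r+1)/2}$ is precisely the point that the first inequality in \eqref{IntComp-g} glosses over --- as a unimodal sum-versus-integral comparison it is only justified when $t\le\tfrac r2e^r$, and for larger $t$ the displayed bound holds only because the series is in fact $O(e^{-t})$, which is exactly the large-$t$ argument you make explicit. What your route costs is the final bookkeeping: you defer the verification that the matched two-regime estimates actually land under the specific envelopes $4/t$ and $27/t^{3/2}$, and those particular constants are reused verbatim later in the paper (e.g.\ in the bound $\int_0^\pi\hker_x(t,x,y)\hker_x(s,x,y)\,dy\le 27(t+s)^{-3/2}$ in the space-regularity section), so that step cannot be omitted in a complete write-up. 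Both arguments are sound, and for the purposes of the rest of the paper any finite constants of the stated orders in $t$ would serve equally well.
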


\begin{proof}
The maximum principle implies $0\leq \hker(t,x,y)$.
To derive  other inequalities, note that, by
integral comparison,
 $$
 \sum_{k\geq 1} e^{-k^2t} \leq
 \int_0^{\infty} e^{-x^2t}dx = \frac{\sqrt{\pi}}{2\sqrt{t}},\ \ t>0,
 $$
 and more generally, for $t>0,\, r\geq 1$,
 \bel{IntComp-g}
 \sum_{k\geq 1} k^{r}e^{-k^2t}\leq
 \left(\frac{r}{2t}\right)^{(r+1)/2}+
 \int_0^{\infty} x^{r} e^{-x^2t} dx
 \leq \frac{(r+1)^{(r+1)}}{t^{(r+1)/2}}.
 \ee
 To complete the proof, we use
 \begin{align*}
&|\hker(t,x,y)|\leq \frac{1}{2}+\frac{2}{\pi}\sum_{k\geq 1} e^{-k^2t},\
|\hker_x(t,x,y)|\leq \sum_{k\geq 1}k e^{-k^2t},\\
&|\hker_{xx}(t,x,y)|\leq \sum_{k\geq 1}k^2 e^{-k^2t},\
|\hker_t(t,x,y)|\leq \sum_{k\geq 1}k^2 e^{-k^2t}.\\
\end{align*}

\end{proof}

 The main consequence of  \eqref{eq:kernel} and
\eqref{eq:Parseval} is
 \begin{proposition}
 \label{prop-WCNormA}
 (1) For $|\ba|=0$,
 \bel{u0-L2}
 \|u_{\zm}(t,\cdot)\|_0\leq \|u_0\|_0,\ \ t>0,
 \ee
 and
\bel{RF-2}
 |u_{\zm}(s,y)|\leq
  C(p,s,t)\|u_0\|_{L_p((0,\pi))},\ 0<s\leq t,\ 0\leq y\leq \pi,
 \ee
 with
 $$
 C(p,s,t)=
 \begin{cases}
 (1+\sqrt{t}\,)s^{-1/2},& {\rm \ if\ } p=1,\\
 \pi^{1/p'} (1+\sqrt{t}\,)s^{-1/2},
 & {\rm \ if\ } 1<p<+\infty,\ p'=\frac{p}{p-1},\\
 1,\ & {\rm \ if\ } p=+\infty.
 \end{cases}
 $$
 In particular,
 \bel{Cpst}
C(p,s,t) \leq  \pi (1+\sqrt{t})s^{-1/2}
\ee
for all $0<s\leq t$ and $1\leq p\leq +\infty$.

 (2) For $|\ba|=n\geq 1,$
 \bel{mod-a-gen-tx}
\begin{split}
&\sum_{|\boldsymbol{\alpha}|=n} |u_{\boldsymbol{\alpha}}(t,x)|^2\\
 &\leq n!\int_{(0,\pi)^n}\left(\int_{\bT^n_{0,t}}
\hker(t-s_n,x,y_n)\cdots \hker(s_{2}-s_1,y_{2},y_1)u_{\zm}(s_1,y_1)
 ds^n\right)^2  dy^n.
 \end{split}
\end{equation}
\end{proposition}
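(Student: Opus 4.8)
The plan is to treat the two parts separately, handling the zeroth chaos coefficient by classical heat-kernel estimates and the higher coefficients by a combination of the explicit propagator formula \eqref{eq:ua-gen}, a symmetrization (Cauchy--Schwarz) inequality, and Parseval's identity \eqref{eq:Parseval}.

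For part (1), I would start from $u_{\zm}(t,\cdot)=\Phi_t u_0$ and its cosine expansion $u_{\zm}(t,\cdot)=\sum_{k\geq 1}e^{-(k-1)^2 t}(u_0,\mfk{m}_k)_0\,\mfk{m}_k$, from which \eqref{u0-L2} is immediate since $e^{-2(k-1)^2 t}\leq 1$. For the pointwise bound \eqref{RF-2} I would write $u_{\zm}(s,y)=\int_0^\pi \hker(s,y,z)u_0(z)\,dz$ and apply H\"older's inequality with conjugate exponents $p,p'$, reducing matters to estimating $\|\hker(s,y,\cdot)\|_{L_{p'}((0,\pi))}$. The three cases then follow from Proposition \ref{prop:hker}: for $p=\infty$ the relevant norm is $\int_0^\pi\hker(s,y,z)\,dz$, which equals $1$ because $\int_0^\pi\cos(kz)\,dz=0$ for $k\geq 1$; for $p=1$ one uses the sup-bound $\hker\leq(1+\sqrt{s})/\sqrt{s}\leq(1+\sqrt{t})s^{-1/2}$; and for $1<p<\infty$ one combines this sup-bound with the length $\pi$ of the interval to produce the factor $\pi^{1/p'}$. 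Estimate \eqref{Cpst} then follows by checking that each of the three expressions for $C(p,s,t)$ is dominated by $\pi(1+\sqrt{t})s^{-1/2}$, using $s\leq t$ in the case $p=\infty$.

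For part (2), I would substitute the kernel representation \eqref{eq:kernel} and the multiplication operators $\opr{M}_k f=\mfk{m}_k f$ into \eqref{eq:ua-gen}, so that for a multi-index $\ba$ with $|\ba|=n$ and characteristic set $K_{\ba}=\{k_1,\ldots,k_n\}$,
$$
u_{\ba}(t,x)=\frac{1}{\sqrt{\ba!}}\sum_{\sigma\in\mathcal{P}_n}
C\big(k_{\sigma(1)},\ldots,k_{\sigma(n)};x\big),
$$
where $C(l_1,\ldots,l_n;x)$ denotes the coefficient obtained by integrating the kernel chain $\hker(t-s_n,x,y_n)\cdots\hker(s_2-s_1,y_2,y_1)u_{\zm}(s_1,y_1)$ against $\mfk{m}_{l_1}(y_1)\cdots\mfk{m}_{l_n}(y_n)$ over $(0,\pi)^n$ and then over $\bT^n_{0,t}$. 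By Fubini, $C(l_1,\ldots,l_n;x)$ is exactly the $(l_1,\ldots,l_n)$ Fourier coefficient, in the tensor cosine basis, of the function
$$
H(x,y_1,\ldots,y_n)=\int_{\bT^n_{0,t}}\hker(t-s_n,x,y_n)\cdots
\hker(s_2-s_1,y_2,y_1)u_{\zm}(s_1,y_1)\,ds^n,
$$
whose squared $L_2((0,\pi)^n)$ norm is precisely the integral on the right-hand side of \eqref{mod-a-gen-tx}.

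The last step, which I expect to be the main obstacle, is the bookkeeping that converts the multi-index sum into a sum over unrestricted tuples and then invokes Parseval. Writing $\bar C=\frac{1}{n!}\sum_\sigma C(\cdot\circ\sigma)$ for the symmetrization of $C$ in its indices, the displayed formula gives $u_{\ba}(t,x)=(n!/\sqrt{\ba!})\,\bar C(k_1,\ldots,k_n;x)$. Applying the Cauchy--Schwarz (Jensen) inequality to the average defining $\bar C$ and then summing, I would use the fact that each permutation orbit of $K_{\ba}$ contains exactly $n!/\ba!$ distinct tuples, each arising from $\ba!$ permutations, so that summing over all $\ba$ with $|\ba|=n$ reproduces every tuple in $\bN^n$ exactly once. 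This yields
$$
\sum_{|\ba|=n}|u_{\ba}(t,x)|^2\leq n!\sum_{(l_1,\ldots,l_n)\in\bN^n}C(l_1,\ldots,l_n;x)^2,
$$
and an $n$-fold application of Parseval's identity \eqref{eq:Parseval} identifies the right-hand sum with $\|H(x,\cdot)\|_{L_2((0,\pi)^n)}^2$, which is exactly \eqref{mod-a-gen-tx}. The delicate points are keeping the factorials $\ba!$ and $n!$ consistent through the symmetrization and recognizing that the Cauchy--Schwarz step is precisely what accounts for replacing the symmetrized coefficient by the unsymmetrized kernel $H$.
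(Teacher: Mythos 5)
Your proposal is correct and follows essentially the same route as the paper: part (1) is the identical heat-kernel/H\"older argument, and part (2) rests on the same identification of $u_{\ba}(t,x)$ (up to the factor $\sqrt{n!}$) with a Fourier coefficient of the kernel chain $F_n(t,x;\cdot)$ in the tensor cosine basis of $L_2((0,\pi)^n)$. Your Cauchy--Schwarz-on-the-symmetrization plus orbit-counting plus Parseval step is just an elementary unpacking of what the paper phrases as orthonormality of the symmetrized functions $\mathfrak{e}_{\ba}$ together with Bessel's inequality, and the factorial bookkeeping you describe is exactly right.
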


\begin{proof} (1) For $|\ba|=0$,
$$
u_{\zm}(s,y) =  \int_0^{\pi} \hker(s,y,z)u_0(z)dz,
$$
with $\hker$ from \eqref{eq:kernel1}.
Then
$$
\|u_{\zm}(s,\cdot)\|_0=\sum_{k\geq 1} e^{-(k-1)^2s} \ u_{0,k}^2,
$$
from which \eqref{u0-L2} follows.

To derive \eqref{RF-2} when $p<\infty$,
 we use  the H\"{o}lder inequality and \eqref{eq:hker0};
 if $p=+\infty$, then we use $\int_0^{\pi}  \hker(s,y,z) dz=1$ instead of the
 upper bound in \eqref{eq:hker0}.

(2) It follows from \eqref{eq:ua-gen} that, for $|\ba|\geq 1$,
\begin{equation}
\label{eq:ua-gen-1}
\begin{split}
u_{\ba}(t,x)=&\frac{1}{\sqrt{\ba!}}\sum_{\sigma\in \mathcal{P}_n}
\int_{(0,\pi)^n}\int_{\bT^n_{0,t}}
 \hker(t-s_n,x,y_n)\mfk{m}_{k_{\sigma(n)}}(y_n)\\
&\cdots \hker(s_{2}-s_1,y_2,y_1) \mfk{m}_{k_{\sigma(1)}}(y_1)
u_{\zm}(s_1,y_1)\, ds^n\, dy^n.
\end{split}
\end{equation}
Using \eqref{eq:kernel} and notations
\begin{align}
\notag
\mathfrak{e}_{\ba}(y_1,\ldots,y_n)&=\frac{1}{\sqrt{n!\,\ba!}}
\sum_{\sigma\in \mathcal{P}_n}
\mfk{m}_{k_{\sigma(n)}}(y_n)\cdots \mfk{m}_{k_{\sigma(1)}}(y_1),\\
\label{Fn}
F_n(t,x;y_1,\ldots, y_n)&=
\int_{\bT^n_{0,t}}\hker(t-s_n,x,y_n)
\cdots \hker(s_{2}-s_1,y_2,y_1) u_{\zm}(s_1,y_1)\, ds^n,
\end{align}
we re-write \eqref{eq:ua-gen-1} as
\bel{FCAlfa}
u_{\ba}(t,x)=\sqrt{n!}\int_{(0,\pi)^n}
F_n(t,x,y_1,\ldots, y_n)\mathfrak{e}_{\ba}(y_1,\ldots,y_n) dy^n.
\ee
The collection $\{\mathfrak{e}_{\ba}, \ |\ba|=n\}$ is an
orthonormal basis in the symmetric part of the space
$L_2\big((0,\pi)^n\big)$,  so that
 $u_{\ba}$ becomes the
 corresponding Fourier coefficient of the function $F_n$,
 and \eqref{mod-a-gen-tx} becomes  Bessel's inequality.
\end{proof}

\begin{remark}
\label{rem:sym}
 It follows from \eqref{FCAlfa} that
$$
\sum_{|\boldsymbol{\alpha}|=n} |u_{\boldsymbol{\alpha}}(t,x)|^2
=n!\int_{(0,\pi)^n}\widetilde{F}_n^2(t,x;y_1,\ldots,y_n)dy^n,
$$
where
$$
\widetilde{F}_n(t,x;y_1,\ldots,y_n)=
\frac{1}{n!}\sum_{\sigma\in \mathcal{P}_n}
F_n(t,x;y_{\sigma(1)},\ldots,y_{\sigma(n)})
$$
is the symmertrization of $F_n$ from \eqref{Fn}. By the Cauchy-Schwarz
inequality,
$$
\|\widetilde{F}_n\|_{L_2((0,\pi)^n)}\leq
\|F_n\|_{L_2((0,\pi)^n)},
$$
and a separate analysis is necessary to establish a more precise connection
between $\|\widetilde{F}_n\|_{L_2((0,\pi)^n)}$ and
$\|F_n\|_{L_2((0,\pi)^n)}$.
The upper bound
\eqref{mod-a-gen-tx} is enough for the purposes of  this paper.
\end{remark}

 \section{Basic Regularity of the Chaos Solution}
 \label{sec:RCS}

The objective of this section is to
 show that, for each $t>0$,  the chaos solution of \eqref{eq:main} is
 a regular, as opposed to generalized,  random variable, and to  introduce the main
techniques necessary to establish better regularity of the solution.

 \begin{theorem}
 \label{th:Lp}
If $u_0\in L_2((0,\pi))$, then, for every $t>0$, the solution
of \eqref{eq:main} satisfies
 \bel{eq:Lq-int}
 u(t,\cdot)\in \bigcap_{q>1} L_{2,q}\big(W;L_2((0,\pi))\big).
 \ee
 \end{theorem}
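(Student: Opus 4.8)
The plan is to verify directly that, for every $q>1$,
\[
\sum_{\ba\in\cJ} q^{|\ba|}\,\|u_{\ba}(t,\cdot)\|_0^2
=\sum_{n\geq 0}q^{n}\sum_{|\ba|=n}\|u_{\ba}(t,\cdot)\|_0^2<\infty,
\]
which, since $q^{|\ba|}\geq 1$ also forces $u(t,\cdot)\in L_2\big(W;L_2((0,\pi))\big)$, is exactly the membership \eqref{eq:Lq-int}. The whole argument rests on the Bessel-type bound \eqref{mod-a-gen-tx} together with the kernel estimates of Proposition \ref{prop:hker}. First I would integrate \eqref{mod-a-gen-tx} in $x$ over $(0,\pi)$, which by Tonelli gives
\[
\sum_{|\ba|=n}\|u_{\ba}(t,\cdot)\|_0^2
\leq n!\,\big\|F_n(t,\cdot)\big\|^2_{L_2((0,\pi)^{n+1})},
\]
with $F_n$ as in \eqref{Fn}, so the task reduces to a good bound on the $L_2$-norm of $F_n$ in the variables $(x,y_1,\dots,y_n)$.

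Next I would apply the Cauchy--Schwarz inequality on the time-simplex $\bT^n_{0,t}$, whose volume is $t^n/n!$, to obtain
\[
F_n^2(t,x;y_1,\dots,y_n)
\leq \frac{t^n}{n!}\int_{\bT^n_{0,t}}
\hker^2(t-s_n,x,y_n)\cdots\hker^2(s_2-s_1,y_2,y_1)\,u_{\zm}^2(s_1,y_1)\,ds^n.
\]
The point of squaring is that the spatial integrations now decouple along the chain $x\to y_n\to\cdots\to y_1$: using the identity $\int_0^{\pi}\hker^2(\tau,x,y)\,dx=\hker(2\tau,y,y)$ (a consequence of orthonormality of $\{\mfk{m}_k\}$) together with the diagonal bound $\hker(2\tau,y,y)\leq 1+1/\sqrt{2\tau}$ from \eqref{eq:hker0}, I would integrate successively in $x,y_n,\dots,y_2$, each step peeling off one factor $1+1/\sqrt{2g}$ attached to a time-gap $g$, and finally integrate in $y_1$ against $u_{\zm}^2(s_1,\cdot)$, whose integral is at most $\|u_0\|_0^2$ by \eqref{u0-L2}. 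This yields
\[
\big\|F_n(t,\cdot)\big\|^2_{L_2((0,\pi)^{n+1})}
\leq \frac{t^n}{n!}\,\|u_0\|_0^2
\int_{\bT^n_{0,t}}\prod_{\text{gaps}}\Bigl(1+\tfrac{1}{\sqrt{2g}}\Bigr)\,ds^n,
\]
where the $n$ gaps are $t-s_n,\,s_n-s_{n-1},\,\dots,\,s_2-s_1$.

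Since every gap is at most $t$, I would then use $1+1/\sqrt{2g}=(1+\sqrt{2g})/\sqrt{2g}\leq C_t\,g^{-1/2}$ with $C_t=(1+\sqrt{2t})/\sqrt{2}$, turning the remaining simplex integral into a Dirichlet (Beta) integral,
\[
\int_{\bT^n_{0,t}}(t-s_n)^{-1/2}(s_n-s_{n-1})^{-1/2}\cdots(s_2-s_1)^{-1/2}\,ds^n
=\frac{(\pi t)^{n/2}}{\Gamma(\tfrac n2+1)},
\]
the exponent $1$ on the leftover interval $[0,s_1]$ (no singularity there) accounting for the absence of an extra $\Gamma(1/2)$. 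Combining the three displays gives
\[
\sum_{|\ba|=n}\|u_{\ba}(t,\cdot)\|_0^2
\leq \|u_0\|_0^2\,\frac{D_t^{\,n}}{\Gamma(\tfrac n2+1)},
\qquad D_t=C_t\,t\,\sqrt{\pi t},
\]
and, because $\Gamma(n/2+1)$ grows faster than any geometric sequence, $\sum_{n}q^{n}D_t^{\,n}/\Gamma(n/2+1)<\infty$ for every $q$, which establishes \eqref{eq:Lq-int}.

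The step I expect to require the most care is the spatial factorization after Cauchy--Schwarz: the squared kernels $\hker^2$ each carry a $\tau^{-1/2}$ singularity in their time-gap (reflected in $\hker(2\tau,y,y)\approx 1/\sqrt{2\tau}$), so one must check both that the $y$-integrations genuinely decouple along the chain---this is exactly where the identity $\int_0^\pi\hker^2(\tau,x,y)\,dx=\hker(2\tau,y,y)$ and the nonnegativity $\hker\geq0$ from Proposition \ref{prop:hker} are essential---and that the resulting product of singular factors remains integrable over the simplex. The saving feature is that each exponent $-1/2$ exceeds $-1$, so every one-dimensional gap integral converges, and the Dirichlet formula produces the gamma-function denominator $\Gamma(n/2+1)$; it is precisely this super-exponential gain, rather than a mere geometric one, that upgrades square-integrability to membership in $L_{2,q}$ for all $q>1$ simultaneously.
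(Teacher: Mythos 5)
Your argument is correct, and it reaches the same decay rate $C^n(t)\,n^{-n/2}$ for $\sum_{|\ba|=n}\|u_{\ba}(t,\cdot)\|_0^2$ as the paper's \eqref{mod-a-gen-1-int}, which is exactly what is needed to sum $q^n$ against it for every $q>1$. But you get there by a genuinely different route. The paper expands the square of the time integral in \eqref{mod-a-gen-tx} into a double integral over $\bT^n_{0,t}\times\bT^n_{0,t}$, evaluates the spatial integrals using the first-power semigroup composition $\int_0^{\pi}\hker(t,x,y)\hker(s,y,z)\,dy=\hker(t+s,x,z)$ together with the sup bound \eqref{eq:hker0}, decouples the two time simplices via $(p+q)^{-1/2}\leq p^{-1/4}q^{-1/4}$, and obtains $\Gamma\big((3/4)n+1\big)^{-2}$ from iterated Beta integrals, finally invoking Stirling to beat the $n!$ from Bessel's inequality. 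You instead apply Cauchy--Schwarz over the simplex, so that the volume factor $t^n/n!$ cancels the Bessel $n!$ outright, and you handle the spatial integrations of the squared kernels through the diagonal identity $\int_0^{\pi}\hker^2(\tau,x,y)\,dx=\hker(2\tau,y,y)$, ending with a single Dirichlet integral and the denominator $\Gamma(n/2+1)$. Your version is more elementary (no doubled time variables, no $-1/4$-power decoupling trick); the paper's double-integral machinery is, however, the template it reuses later for the pointwise and H\"older estimates (Theorems \ref{th:rf}, \ref{th:TimeReg}, \ref{th:SpaceReg}), where the crude simplex-volume bound would be harder to adapt. One small correction: in your squared-kernel chain the nonnegativity of $\hker$ is not actually needed (every factor is already a square); it is the paper's first-power argument that relies on $\hker\geq 0$ to replace a kernel by its supremum inside the composition.
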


 \begin{proof}
It follows from \eqref{mod-a-gen-tx} that
 \bel{mod-a-gen}
 \begin{split}
 &\sum_{|\boldsymbol{\alpha}|=n} |u_{\boldsymbol{\alpha}}(t,x)|^2\\
 &\leq n!\int_{(0,\pi)^n}\int_{\bT^n_{0,t}}\int_{\bT^n_{0,t}}
\Big(
\hker(t-s_n,x,y_n)\cdots \hker(s_{2}-s_1,y_{2},y_1)u_{\zm}(s_1,y_1)
\\
&
\phantom{n!\int_{(0,\pi)^n}\int_{\bT^n_{0,t}}\int_{\bT^n_{0,t}}}
\times
\hker(t-r_n,x,y_n)\cdots \hker(r_{2}-r_1,y_{2},y_1)u_{\zm}(r_1,y_1)
\Big)\,
 ds^n\, dr^n\  dy^n.
 \end{split}
\ee
We now integrate both sides of
\eqref{mod-a-gen} with respect to $x$ and use the semigroup property
 \bel{semigr}
 \int_0^{\pi} \hker(t,x,y)\,\hker(s,y,z)\,dy=
 \hker(t+s,x,z)
 \ee
together with   \eqref{eq:hker0} to evaluate the
integrals over $(0,\pi)$ on the right-hand side,
 starting from the outer-most integral.
We also use \eqref{u0-L2}. The result is
\bel{aux-rf1}
 \begin{split}
 \sum_{|\boldsymbol{\alpha}|=n} \|u_{\boldsymbol{\alpha}}(t,\cdot)\|_0^2
& \leq  n!\,\|u_0\|_{0}^2\, \big(1+\sqrt{t}\big)^{2n}
\int_{\bT^n_{0,t}}\int_{\bT^n_{0,t}}
(2t-s_n-r_n)^{-1/2}\\
&(s_n+r_n-s_{n-1}-r_{n-1})^{-1/2}
\cdots (s_2+r_2-s_1-r_1)^{-1/2}
 ds^n\, dr^n.
 \end{split}
\ee
Next, we use the inequality $4pq\leq (p+q)^2$, $p,q > 0$, to find
\begin{equation}
\label{ineq_1}
(p+q)^{-1/2}\leq p^{-1/4}q^{-1/4},
\end{equation}
so that
\bel{aux-time}
\begin{split}
&\int\limits_{\bT^n_{0,t}}\int\limits_{\bT^n_{0,t}}
(2t-s_n-r_n)^{-1/2}(s_n+r_n-s_{n-1}-r_{n-1})^{-1/2}
\cdots (s_2+r_2-s_1-r_1)^{-1/2}ds^ndr^n\\
& \leq  \left(\int\limits_{\bT^n_{0,t}}(t-s_n)^{-1/4}(s_n-s_{n-1})^{-1/4}
\cdots (s_2-s_1)^{-1/4}ds^n\right)^2
\!\!\!\!=
\left(\frac{\big(\Gamma(3/4)\big)^n}{\Gamma((3/4)n+1)}\right)^2
 t^{3n/2},
\end{split}
\ee
 where  $\Gamma$  is the Gamma function
$$
\Gamma(y)=\int_0^{\infty} t^{y-1}e^{-t}dt.
$$
The last equality in \eqref{aux-time} follows by induction using
\bel{beta}
\int_0^t s^p (t-s)^qds=t^{p+q+1}\,\frac{\Gamma(1+p)\Gamma(1+q)}
{\Gamma(2+p+q)},\ \ \ p,q>-1.
\ee

Combining \eqref{mod-a-gen}, \eqref{aux-rf1}, and \eqref{aux-time},
$$
\sum_{|\boldsymbol{\alpha}|=n} \|u_{\boldsymbol{\alpha}}(t,\cdot)\|_0^2
\leq n!
\left(\frac{\big(\Gamma(3/4)\big)^n}{\Gamma((3/4)n+1)}\right)^2
\,\big(1+\sqrt{t}\big)^{2n}\,t^{3n/2}\,\|u_0\|_{0}^2.
$$
As a consequence of the Stirling formula,
$$
\Gamma(1+p)\geq \sqrt{2\pi p} \, p^p e^{-p}\ \ {\rm and } \ \
n!\leq 2\sqrt{\pi} n^ne^{-n},
$$
meaning that
\bel{mod-a-gen-1-int}
\sum_{|\boldsymbol{\alpha}|=n}
\|u_{\boldsymbol{\alpha}}(t,\cdot)\|_0^2
\leq C^n(t) {n^{-n/2}}\, \|u_0\|_0^2,\ t>0,
\ee
with
$$
C(t)= \big(4/3\big)^{3/2}\,e^{1/2}\,\Gamma^2(3/4)\,
\big(1+\sqrt{t}\big)^{2}\,t^{3/2}.
$$
 Since
$$
\bE\|u(t,\cdot)\|^2_{L_{2,q}(L_2((0,\pi))}=\sum_{n=0}^{\infty}
q^n \sum_{\ba\in \cJ: |\ba|=n}  \|u_{\ba}(t,\cdot)\|^2_{0},
$$
and the series
$$
\sum_{n\geq 1} \frac{C^n}{n^{n/2}}=
\sum_{n\geq 1}\left(\frac{C}{\sqrt{n}}\right)^n
$$
converges for every $C>1$, we get \eqref{eq:Lq-int} and
conclude the proof of Theorem \ref{th:Lp}.

\end{proof}

\begin{corollary}
 If $u_0\in L_2((0,\pi))$, then the chaos solution is an
 $L_2((0,\pi))$-valued random process and, for all $t\geq 0$,
 $$
 \bE\|u(t,\cdot)\|_{0}^p
 <\infty, \ \ 1\leq p<\infty.
 $$
\end{corollary}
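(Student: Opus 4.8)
The plan is to derive everything from Theorem \ref{th:Lp}, which provides $u(t,\cdot)\in \bigcap_{q>1} L_{2,q}\big(W;L_2((0,\pi))\big)$ for every $t>0$, together with the hypercontractivity estimate of Proposition \ref{prop:LcInLp}. No new computation is needed: the work is entirely a matter of feeding the conclusion of Theorem \ref{th:Lp} into Proposition \ref{prop:LcInLp} and then separately disposing of the boundary cases $p=1$ and $t=0$, neither of which is covered by the mechanism $1<p<\infty$.

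First I would settle that $u(t,\cdot)$ is a genuine $L_2((0,\pi))$-valued random variable for each $t>0$, so that $t\mapsto u(t,\cdot)$ is an $L_2((0,\pi))$-valued process. For $q>1$ the space $L_{2,q}\big(W;L_2((0,\pi))\big)$ is by definition a subset of $L_2\big(W;L_2((0,\pi))\big)$, so Theorem \ref{th:Lp} gives $u(t,\cdot)\in L_2\big(W;L_2((0,\pi))\big)$; equivalently $\sum_{\ba\in\cJ}\|u_{\ba}(t,\cdot)\|_0^2<\infty$ and the chaos expansion $u(t,\cdot)=\sum_{\ba\in\cJ}u_{\ba}(t,\cdot)\xi_{\ba}$ converges in $L_2\big(W;L_2((0,\pi))\big)$. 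This square-summability is in fact exactly the content of \eqref{mod-a-gen-1-int} after summing over $n$.

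Next, for the moment bound with $1<p<\infty$, given such a $p$ I would choose any $q>\max(1,p-1)$. Theorem \ref{th:Lp} guarantees $u(t,\cdot)\in L_{2,q}\big(W;L_2((0,\pi))\big)$ for this $q$, and since $q>p-1$, Proposition \ref{prop:LcInLp} applied with $X=L_2((0,\pi))$ yields $u(t,\cdot)\in L_p\big(W;L_2((0,\pi))\big)$, that is, $\bE\|u(t,\cdot)\|_0^p<\infty$.

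Finally, the endpoint $p=1$ follows from the case $p=2$ by the Cauchy--Schwarz (Jensen) inequality, $\bE\|u(t,\cdot)\|_0\leq \big(\bE\|u(t,\cdot)\|_0^2\big)^{1/2}<\infty$, and the initial time $t=0$ is immediate because $u(0,\cdot)=u_0$ is non-random with $\|u_0\|_0<\infty$. There is essentially no obstacle here: the only points requiring care are matching the index ranges---ensuring the chosen $q$ simultaneously exceeds $1$ and $p-1$, which is automatic since Theorem \ref{th:Lp} supplies membership for all $q>1$---and treating the two excluded cases $p=1$ and $t=0$ by hand.
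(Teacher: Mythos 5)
Your argument is exactly the paper's: the corollary is deduced by combining the membership $u(t,\cdot)\in\bigcap_{q>1}L_{2,q}\big(W;L_2((0,\pi))\big)$ from Theorem \ref{th:Lp} with the embedding $L_{2,q}(W;X)\subset L_p(W;X)$ of Proposition \ref{prop:LcInLp}. Your additional handling of the endpoints $p=1$ (via Cauchy--Schwarz) and $t=0$ (where $u_0$ is non-random) is a correct and welcome bit of extra care that the paper leaves implicit.
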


\begin{proof}
This follows from \eqref{eq:Lq-int} and Proposition \ref{prop:LcInLp}.
\end{proof}

We will need a slightly more general family of integrals than the one appearing
on the right-hand side of \eqref{aux-time}:
\bel{ab-Int}
\begin{split}
I_1(t;\alpha,\beta)&=\int_0^t (t-s)^{-\alpha}s^{-\beta}ds,\\
I_n(t;\alpha,\beta)&=\int_{\bT_{0,t}^n}(t-s_n)^{-\alpha}\prod_{k=2}^n (s_k-s_{k-1})^{-1/4}
s_1^{-\beta}ds^n, \ \ n=2,3,\ldots,
\end{split}
\ee
for $\alpha\in (0,1),\ \beta\in [0,1)$.
Note that
$$
I_1(t;\alpha,\beta)=\int_0^t (t-s)^{-\alpha}s^{-\beta}ds=
\frac{\Gamma(1-\alpha)\Gamma(1-\beta)}{\Gamma(2-\alpha-\beta)}\, t^{1-\alpha-\beta}
$$
and
$$
I_n(t;\alpha,\beta)=\int_0^t (t-s_n)^{-\alpha}I_{n-1}(s_n;1/4,\beta)ds_n,\ n\geq 1.
$$
By induction and \eqref{beta},
$$
I_n(t;\alpha,\beta)=
\frac{\big(\Gamma(3/4)\big)^{n-1}\Gamma(1-\alpha)\Gamma(1-\beta)}
{\Gamma\big((3n+5-4\alpha-4\beta)/4\big)}\,t^{(3n+1-4\alpha-4\beta)/4},
$$
and then
\bel{ab-Int-1}
n!\,I_n^2(t;\alpha,\beta) \leq C^n(\alpha,\beta,t)n^{-n/2};
\ee
cf. \eqref{mod-a-gen-1-int}.

Next, we show that the chaos solution of \eqref{eq:main} is,
 in fact, a {\tt random field solution}, that is,
  $u(t,x)$ is  well-defined as a
random variable for every $t>0$, $x\in [0,\pi]$.

\begin{theorem}
\label{th:rf}
 If $u_0\in L_{p}((0,\pi))$ for some $1\leq p\leq \infty$, then,
  for every $t>0$ and $x\in [0,\pi]$,
 \bel{eq:Lq}
 u(t,x)\in \bigcap_{q>1} L_{2,q}(W;\bR).
 \ee
\end{theorem}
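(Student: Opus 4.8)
The plan is to estimate each chaos level $\sum_{|\ba|=n}|u_{\ba}(t,x)|^2$ and then sum against $q^n$. Fix $t>0$ and $x\in[0,\pi]$, and fix a $p$ with $1\le p\le\infty$ for which $u_0\in L_p((0,\pi))$. Membership \eqref{eq:Lq} amounts to the convergence of $\sum_{\ba\in\cJ}q^{|\ba|}|u_{\ba}(t,x)|^2$ for every $q>1$, so it suffices to show $\sum_{|\ba|=n}|u_{\ba}(t,x)|^2\le D(t)^n\, n^{-n/2}\,\|u_0\|_{L_p((0,\pi))}^2$ for a constant $D(t)$ independent of $n$. The starting point is the Bessel-type bound \eqref{mod-a-gen-tx}, $\sum_{|\ba|=n}|u_{\ba}(t,x)|^2\le n!\int_{(0,\pi)^n}F_n^2(t,x;y_1,\dots,y_n)\,dy^n$, with $F_n$ as in \eqref{Fn}. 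The essential difference from Theorem \ref{th:Lp} is that there one integrates the analogue of this quantity in $x$ and uses the semigroup property \eqref{semigr} to merge the two heat kernels meeting at $x$; here $x$ is frozen, so that merge is unavailable. This is the main obstacle, and I would get around it by not keeping $u_{\zm}$ in its $L_2$-form but rather invoking the pointwise estimate \eqref{RF-2} in the uniform form \eqref{Cpst}, $|u_{\zm}(s_1,y_1)|\le \pi(1+\sqrt t)\,s_1^{-1/2}\,\|u_0\|_{L_p((0,\pi))}$. This is the one place where the full range $1\le p\le\infty$ is used, and it replaces $u_{\zm}(s_1,y_1)$ in $F_n$ by the deterministic weight $s_1^{-1/2}$.

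After this substitution $\int_{(0,\pi)^n}F_n^2\,dy^n$ is, up to the constant $\big(\pi(1+\sqrt t)\|u_0\|_{L_p((0,\pi))}\big)^2$, a double integral over $\bT^n_{0,t}\times\bT^n_{0,t}$ in the time variables $s^n,r^n$, with the $y$-variables still to be integrated. I would integrate $y_n,y_{n-1},\dots,y_1$ in turn. At the top, the two kernels carrying the frozen point $x$, namely $\hker(t-s_n,x,y_n)$ and $\hker(t-r_n,x,y_n)$, are bounded pointwise through \eqref{eq:hker0} by constant multiples of $(t-s_n)^{-1/2}$ and $(t-r_n)^{-1/2}$, which discards their dependence on $x$ and $y_n$. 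Each subsequent integration $\int_0^{\pi}\hker(s_j-s_{j-1},y_j,y_{j-1})\,\hker(r_j-r_{j-1},y_j,y_{j-1})\,dy_j$ then collapses by \eqref{semigr} to the diagonal value $\hker(s_j+r_j-s_{j-1}-r_{j-1},y_{j-1},y_{j-1})$, which \eqref{eq:hker0} bounds (the time gap being at most $2t$) by a constant multiple of $(s_j+r_j-s_{j-1}-r_{j-1})^{-1/2}$; the last integration in $y_1$ contributes only a factor $\pi$. The net effect is a bound by $c(t)^n$ times $\int_{\bT^n_{0,t}}\int_{\bT^n_{0,t}}(t-s_n)^{-1/2}(t-r_n)^{-1/2}\prod_{j=2}^n (s_j+r_j-s_{j-1}-r_{j-1})^{-1/2}\,s_1^{-1/2}r_1^{-1/2}\,ds^n\,dr^n$. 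Splitting each merged factor by \eqref{ineq_1}, $(s_j+r_j-s_{j-1}-r_{j-1})^{-1/2}\le (s_j-s_{j-1})^{-1/4}(r_j-r_{j-1})^{-1/4}$, decouples the $s$- and $r$-integrals, so this double integral equals the square of $\int_{\bT^n_{0,t}}(t-s_n)^{-1/2}\prod_{j=2}^n(s_j-s_{j-1})^{-1/4}\,s_1^{-1/2}\,ds^n$, which is exactly $I_n(t;1/2,1/2)$ from \eqref{ab-Int}.

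Collecting the constants gives $\sum_{|\ba|=n}|u_{\ba}(t,x)|^2\le c(t)^n\,\|u_0\|_{L_p((0,\pi))}^2\, n!\,I_n^2(t;1/2,1/2)$. Now \eqref{ab-Int-1} with $\alpha=\beta=1/2$ yields $n!\,I_n^2(t;1/2,1/2)\le C^n(1/2,1/2,t)\,n^{-n/2}$, so $\sum_{|\ba|=n}|u_{\ba}(t,x)|^2\le D(t)^n\, n^{-n/2}\,\|u_0\|_{L_p((0,\pi))}^2$, as required. Finally, for any $q>1$ the series $\sum_{n\ge1}q^n D(t)^n n^{-n/2}=\sum_{n\ge1}\big(qD(t)/\sqrt n\big)^n$ converges, because the factor $n^{-n/2}$ eventually dominates any geometric growth; summing over $n$ shows $\sum_{\ba\in\cJ}q^{|\ba|}|u_{\ba}(t,x)|^2<\infty$ for every $q>1$, which is \eqref{eq:Lq}. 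The only genuinely new ingredient compared with Theorem \ref{th:Lp} is trading the $x$-integration for the pointwise kernel and $u_{\zm}$ bounds; this is also what forces the exponent $\alpha=1/2$ (instead of $1/4$) in the leading time factor and hence the use of the general family $I_n(t;\alpha,\beta)$ rather than the special integral in \eqref{aux-time}.
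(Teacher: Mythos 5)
Your proposal is correct and follows the same strategy as the paper's own proof: replace $u_{\zm}(s_1,y_1)$ by the pointwise bound \eqref{Cpst}, collapse the $y$-integrals via the semigroup property \eqref{semigr} together with \eqref{eq:hker0}, decouple the $s$- and $r$-integrals with \eqref{ineq_1}, and finish with \eqref{ab-Int-1}. The only (harmless) deviation is at the top level: the merge at the frozen point $x$ is in fact still available, since integrating over $y_n$ gives $\int_0^{\pi}\hker(t-s_n,x,y_n)\hker(t-r_n,x,y_n)\,dy_n=\hker(2t-s_n-r_n,x,x)\leq C(2t-s_n-r_n)^{-1/2}$ --- this is what the paper does, arriving at $I_n(t;1/4,1/2)$ --- whereas your pointwise bound on the two top kernels yields $I_n(t;1/2,1/2)$; both are covered by \eqref{ab-Int-1}, so the conclusion is unaffected.
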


\begin{proof}
By Proposition \ref{prop-WCNormA},
inequality  \eqref{mod-a-gen} becomes
\bel{mod-a-gen-prf}
 \begin{split}
 &\sum_{|\boldsymbol{\alpha}|=n} |u_{\boldsymbol{\alpha}}(t,x)|^2
 \leq n!\pi^{2}\big(1+\sqrt{t}\big)^2\|u_0\|^2_{L_p((0,\pi))}\\
 &\int\limits_{(0,\pi)^n}\ \ \ \iint\limits_{\bT^n_{0,t}\times \bT^n_{0,t}}
\Big(
\hker(t-s_n,x,y_n)\cdots \hker(s_{2}-s_1,y_{2},y_1)s_1^{-1/2}
\\
&
\phantom{n!\int_{(0,\pi)^n}\int_{\bT^n_{0,t}}\int_{\bT^n_{0,t}}}
\times
\hker(t-r_n,x,y_n)\cdots \hker(r_{2}-r_1,y_{2},y_1)r_1^{-1/2}
\Big)\,
 ds^n\, dr^n\  dy^n.
 \end{split}
\ee
We now use the semigroup property \eqref{semigr}
together with   \eqref{eq:hker0} to evaluate the
integrals over $(0,\pi)$  on the right-hand side of \eqref{mod-a-gen-prf}
starting from the inner-most integral with respect to $y_1$.
The result is
\bel{aux-prf1}
 \begin{split}
 &\sum_{|\boldsymbol{\alpha}|=n} |u_{\boldsymbol{\alpha}}(t,x)|^2
 \leq  n!\, \pi^{2} \big(1+\sqrt{t}\big)^{2(n+1)}\,
 \|u_0\|_{L_p((0,\pi))}^2
\iint\limits_{\bT^n_{0,t}\times \bT^n_{0,t}}
(2t-s_n-r_n)^{-1/2}\\
&(s_n+r_n-s_{n-1}-r_{n-1})^{-1/2}
\cdots (s_2+r_2-s_1-r_1)^{-1/2}s_1^{-1/2}r_1^{-1/2}
 ds^n\, dr^n.
 \end{split}
\ee
Next, similar to \eqref{aux-time}, we use \eqref{ineq_1}
and \eqref{ab-Int}  to compute
\bel{aux-time-prf}
\begin{split}
\iint\limits_{\bT^n_{0,t}\times \bT^n_{0,t}}&
(2t-s_n-r_n)^{-1/2}(s_n+r_n-s_{n-1}-r_{n-1})^{-1/2}\\
&\cdots (s_2+r_2-s_1-r_1)^{-1/2}s_1^{-1/2}r_1^{-1/2}ds^ndr^n\\
& \leq  \left(\int\limits_{\bT^n_{0,t}}(t-s_n)^{-1/4}(s_n-s_{n-1})^{-1/4}
\cdots (s_2-s_1)^{-1/4}s_1^{-1/2}ds^n\right)^2\\
&=
I_n^2(t;1/4,1/2).
\end{split}
\ee
  Combining \eqref{aux-prf1}
 with \eqref{aux-time-prf} and \eqref{ab-Int-1},
 \bel{final-prf}
\sum_{|\boldsymbol{\alpha}|=n}
|u_{\boldsymbol{\alpha}}(t,x)|^2
\leq C^n(t) {n^{-n/2}}\, \|u_0\|_{L_p((0,\pi))}^2,
\ee
for a suitable $C(t)$.
 Then \eqref{final-prf} leads to \eqref{eq:Lq} in the same way as
 \eqref{mod-a-gen-1-int} lead to \eqref{eq:Lq-int}, completing the
 proof of Theorem \ref{th:rf}.

\end{proof}

\begin{corollary}
For every $t>0$, $x\in [0,\pi]$, and $1\leq p<\infty$,
 $$
 \bE|u(t,x)|^p<\infty.
 $$
 \end{corollary}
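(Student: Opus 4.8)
The plan is to obtain this as an immediate consequence of Theorem~\ref{th:rf} and Proposition~\ref{prop:LcInLp}, in direct analogy with the corollary to Theorem~\ref{th:Lp}. Fix $t>0$ and $x\in[0,\pi]$. Theorem~\ref{th:rf} guarantees that $u(t,x)\in L_{2,q}(W;\bR)$ for every $q>1$. Given $p$ with $1<p<\infty$, I would choose any $q>\max(1,p-1)$, so that the hypotheses of Proposition~\ref{prop:LcInLp} are met with $X=\bR$; the inclusion $L_{2,q}(W;\bR)\subset L_p(W;\bR)$ then yields $\bE|u(t,x)|^p<\infty$.

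The only point requiring a separate word is the endpoint $p=1$, which lies outside the range $1<p<\infty$ covered by Proposition~\ref{prop:LcInLp}. Here I would note that for $q>1$ the weight $q^{|\ba|}\geq 1$ forces $\|u(t,x)\|_{L_2(W;\bR)}\leq\|u(t,x)\|_{L_{2,q}(W;\bR)}$, so that $u(t,x)\in L_2(W;\bR)$ by Theorem~\ref{th:rf}; since $(\Omega,\cF,\mathbb{P})$ is a probability space, the Cauchy--Schwarz inequality gives $\bE|u(t,x)|\leq \big(\bE|u(t,x)|^2\big)^{1/2}<\infty$.

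I do not expect any genuine obstacle: all of the analytic effort has already been spent in establishing the bound \eqref{final-prf}, and hence Theorem~\ref{th:rf}, so the corollary is a purely formal repackaging of results already in hand. The only care needed is to keep track of the admissible range $q>p-1$ in Proposition~\ref{prop:LcInLp} and to dispatch the borderline case $p=1$ via the trivial embedding $L_2(W;\bR)\subset L_1(W;\bR)$ valid on a finite measure space.
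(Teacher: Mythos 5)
Your proposal is correct and follows exactly the paper's own one-line argument: the corollary is deduced from Theorem \ref{th:rf} (membership in $\bigcap_{q>1}L_{2,q}(W;\bR)$) combined with Proposition \ref{prop:LcInLp}. Your separate treatment of the endpoint $p=1$ via the embedding $L_2(W;\bR)\subset L_1(W;\bR)$ on a probability space is a small point the paper leaves implicit, but it does not constitute a different route.
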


 \begin{proof}
This  follows from \eqref{eq:Lq} and
Proposition \ref{prop:LcInLp}.
\end{proof}

Finally, we establish a version of the maximum principle for the chaos solution.

\begin{theorem}
\label{th:positivity}
If $u_0(x)\geq 0$ for all $x\in [0,\pi],$
and $u=u(t,x)$ is a random field solution
 of \eqref{eq:main} such that
 $$
 u\in L_2\big(\Omega\times [0,T], L_p((0,\pi))\big),
 $$
 then, with probability one,
$u(t,x)\geq 0$ for all $t\in [0,T]$ and $x\in [0,\pi]$.
\end{theorem}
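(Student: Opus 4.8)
The plan is to deduce positivity from a Feynman--Kac representation of a truncated equation, where positivity is transparent, and then to pass to the limit. Throughout I identify the given random field solution $u$ with the chaos solution of Proposition~\ref{prop:CS-gen}, the two agreeing by uniqueness in the class $L_2(\Omega\times[0,T],L_p((0,\pi)))$. First I would truncate the noise: for $N\geq 1$ set $\dot W^N=\sum_{k=1}^N \mfk{m}_k\xi_k$ and let $u^N$ be the chaos solution of $u_t=u_{xx}+u\diamond\dot W^N$ with the same data $u_0$ and Neumann boundary conditions. Comparing propagators, the operators $\opr{M}_k$ with $k>N$ never enter, so by induction on $|\ba|$ the chaos coefficients satisfy $u^N_{\ba}=u_{\ba}$ whenever $\ba$ is supported in $\{1,\dots,N\}$ and $u^N_{\ba}=0$ otherwise. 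Consequently
\[
\bE\|u(t,\cdot)-u^N(t,\cdot)\|_0^2=\sum_{\ba:\,\operatorname{supp}\ba\not\subset\{1,\dots,N\}}\|u_{\ba}(t,\cdot)\|_0^2\longrightarrow 0,
\]
the tail being summable by the estimate $\sum_{|\ba|=n}\|u_{\ba}(t,\cdot)\|_0^2\leq C^n(t)\,n^{-n/2}\|u_0\|_0^2$ of Theorem~\ref{th:Lp}; the analogous bound from Theorem~\ref{th:rf} gives $\bE|u(t,x)-u^N(t,x)|^2\to 0$ for each fixed $(t,x)$.

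Next I would show $u^N(t,x)\geq 0$ pointwise in $\omega$. For each $\omega$, $\dot W^N(\cdot,\omega)$ is a bounded smooth function of $x$, so the Wick-exponential Feynman--Kac formula (cf.\ \cite{HOUZ-2}) applies and yields
\[
u^N(t,x)=\bE^{B}\Big[u_0(B_t)\exp\Big(\int_0^t \dot W^N(B_s)\,ds-\tfrac12\int_0^t\!\!\int_0^t c_N(B_s,B_r)\,ds\,dr\Big)\Big],
\]
where $c_N(x,y)=\sum_{k=1}^N\mfk{m}_k(x)\mfk{m}_k(y)$ is the covariance of $\dot W^N$ and $B$ is, independently of $\{\xi_k\}$, the reflecting diffusion on $[0,\pi]$ whose transition density is the Neumann heat kernel $\hker$ of \eqref{eq:kernel1} (its generator being $\partial_{xx}$ with $u_x(t,0)=u_x(t,\pi)=0$). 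That the right-hand side solves the propagator \eqref{eq:ppgA} with $\mfk{m}_k$ restricted to $k\leq N$ is verified by expanding the Wick exponential into its chaos series and matching the Duhamel terms. For every fixed $\omega$ and every Brownian path the exponent is a real number, so the exponential is strictly positive; since $u_0\geq 0$ and $B$ is independent of the noise, the $\bE^{B}$-average is nonnegative, whence $u^N(t,x)\geq 0$ almost surely.

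Finally I would pass to the limit. For each fixed $(t,x)$ the $L_2(\Omega)$ convergence $u^N(t,x)\to u(t,x)$ produces an almost surely convergent subsequence, so $u(t,x)\geq 0$ a.s. Applying this on a countable dense set of points $(t,x)\in[0,T]\times[0,\pi]$ and invoking the joint continuity of $(t,x)\mapsto u(t,x)$ established elsewhere in the paper via the Kolmogorov criterion upgrades the conclusion to: with probability one, $u(t,x)\geq 0$ simultaneously for all $t\in[0,T]$ and $x\in[0,\pi]$. The main obstacle is the middle step: justifying rigorously that the manifestly positive Feynman--Kac functional coincides with the chaos solution of the truncated equation, i.e.\ the Wick-exponential identity together with the correct reflecting (Neumann) boundary behavior, and confirming that truncation merely deletes multi-indices rather than introducing spurious coefficients. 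Once this is in place, the convergence and the continuity-based upgrade are routine; I note that a direct Feynman--Kac representation of the untruncated solution is available in principle but requires Brownian local times, which the truncation conveniently avoids.
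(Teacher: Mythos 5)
Your argument is correct in outline but takes a genuinely different route from the paper. The paper's proof is a soft, two-line reduction to a deterministic fact: it forms the S-transform $V(t,x;h)=\bE\big(u(t,x)\exp(\dot{W}(h)-\tfrac12\|h\|_0^2)\big)$ for smooth compactly supported $h$, observes from the propagator \eqref{eq:ppgA} that $V$ solves the deterministic parabolic equation $V_t=V_{xx}+hV$ with Neumann conditions and nonnegative initial data $u_0$, invokes the classical maximum principle to get $V\geq 0$, and concludes because the stochastic exponentials are total in $L_2(W;\bR)$. Your proof instead makes positivity manifest probabilistically: truncate the noise, represent the truncated solution by a Feynman--Kac formula with a conditionally Wick-renormalized exponential, and pass to the limit. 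Your supporting steps check out: the induction showing $u^N_{\ba}=u_{\ba}$ for $\ba$ supported in $\{1,\dots,N\}$ and $u^N_{\ba}=0$ otherwise is right (only indices in the support of $\ba$ enter the propagator, since $\sqrt{\alpha_k}=0$ otherwise), the $L_2$ convergence is then the tail of the convergent series from Theorems \ref{th:Lp} and \ref{th:rf}, and the Feynman--Kac identity you flag as the main obstacle is indeed verifiable by expanding $\prod_k\big(\int_0^t\mfk{m}_k(B_s)\,ds\big)^{\alpha_k}$ over time-orderings and matching against \eqref{eq:ua-gen-1} via the symmetry $\hker(t,x,y)=\hker(t,y,x)$; note also that Proposition \ref{prop:WP0} guarantees the truncated equation lives honestly in $L_2(W;L_2((0,\pi)))$, which supports your setup. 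What each approach buys: the paper's is shorter, requires no probabilistic representation, and works directly with the untruncated equation, but it leans on the (standard yet not entirely trivial) fact that nonnegativity of the S-transform against all positive stochastic exponentials forces a.s.\ nonnegativity; yours is longer and defers a nontrivial identity, but it explains positivity structurally and would also yield strict positivity of $u^N$. Both proofs share the same final caveat, which the paper relegates to the remark following the theorem: the pointwise argument gives $u(t,x)\geq 0$ a.s.\ for each fixed $(t,x)$, and upgrading to a single null set for all $(t,x)$ requires the continuity of a modification, exactly as you invoke it.
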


\begin{proof}
Let $h=h(x)$ be a smooth function with compact support
in $(0,\pi)$ and define
$$
V(t,x;h)=\bE \left(u(t,x)
\exp\left(\dot{W}(h)-\frac{1}{2}\|h\|^2_{L_2(0,\pi)}\right)
\right).
$$
Writing
$h(x)=\sum_{k=1}^{\infty} h_k\mfk{m}_k(x)$
and
$h^{\ba}=\prod_k h_k^{\alpha_k},$
we find
$$
V(t,x;h)=\sum_{\ba\in \cJ}
\frac{h^{\ba}u_{\ba}(t,x)}{\sqrt{\ba!}}.
$$
By  \eqref{eq:ppgA}, the function
$V=V(t,x;h)$ satisfies
$$
\frac{\partial V(t,x;h)}{\partial t} =
\frac{\partial^2 V(t,x;h)}{\partial x^2} +
h(x)V(t,x;h),\ 0<t\leq T, \ x\in (0,\pi),
$$
with $V(0,x;h)=u_0(x)$ and $V_x(t,0;h)=V_x(t,\pi;h)=0$,
 and then the
maximum principle implies
$V(t,x;h)\geq 0$ for  all $t\in [0,T], \ x\in [0,\pi]$.
The conclusion of the theorem now follows, because the
collection of the random variables
$$
\left\{\exp\left(\dot{W}(h)-\frac{1}{2}\|h\|^2_{L_2((0,\pi))}\right),
 \ \ h
 \ {\rm smooth\ with \ compact\  support \ in } \ (0,\pi)\right\}
 $$
is dense in $L_2(W;\bR)$; cf. \cite[Lemma 4.3.2]{Oksendal}.
\end{proof}

\begin{remark}
If $u=u(t,x)$ is continuous in $(t,x)$, then
there exists a single probability-one subset $\Omega'$ of
$\Omega$ such that $u=u(t,x,\omega)>0$ for all
$t\in [0,T]$, $x\in [0,\pi]$, and $\omega\in \Omega'$.
 \end{remark}

\section{Equation With Additive Noise}
\label{sec:AdN}

The objective of this section is to establish the bench-mark
space-time regularity result for  \eqref{eq:main} by considering the
corresponding equation with additive noise:
\bel{eq:add-n}
\begin{split}
U_t&=U_{xx}+\dot{W}(x),\ t>0,\ x\in (0,\pi),\\
U(0,x)&=0,\ U_x(t,0)=U_x(0,\pi)=0.
\end{split}
\ee

By the  variation of parameters formula, the solution of
\eqref{eq:add-n} is
$$
U(t,x)=\int_0^t\int_0^{\pi} \hker(s,x,y)dW(y)ds.
$$
Using \eqref{eq:kernel1},
\begin{align}
\label{eq:add-n-sol}
U(t,x)&=\frac{t}{\pi}\zeta_0+\frac{2}{\pi}
\sum_{k\geq 1} k^{-2}\big(1-e^{-k^2t})\cos(kx)\zeta_k,\\
\label{AddN-Ux}
U_x(t,x)&=-\frac{2}{\pi}
\sum_{k\geq 1} k^{-1}\big(1-e^{-k^2t}\big)\sin(kx)\zeta_k,
\end{align}
where
$$
\zeta_0=W(\pi),\ \ \ \zeta_k=\int_0^{\pi} \cos(kx)dW(x),\ k\geq 1,
$$
are independent Gaussian random variables with zero mean.
In particular, the series on the right-hand sides of \eqref{eq:add-n-sol}
and \eqref{AddN-Ux} converge with probability one for
every $t>0$ and $x\in [0,\pi]$.

Let us now recall the necessary definitions of the H\"older spaces.
For  a function $f=f(x), \ x\in (x_1,x_2)$,
$-\infty<x_1<x_2<+\infty$, we write
$$
f\in \mathcal{C}^{\alpha}((x_1,x_2)), \ 0<\alpha\leq 1,
$$
or, equivalently,  $f$ is H\"older$(\alpha)$, if
$$
\sup_{x,y\in (x_1,x_2),x\not=y} \frac{|f(x)-f(y)|}{|x-y|^{\alpha}}<\infty.
$$
Similarly,
$$
f\in \mathcal{C}^{1+\alpha}((x_1,x_2))
$$
if $f$ is continuously differentiable on $[x_1,x_2]$ and
$$
\sup_{x,y\in (x_1,x_2),x\not=y} \frac{|f'(x)-f'(y)|}{|x-y|^{\alpha}}<\infty.
$$
We also write $f\in \mathcal{C}^{\beta-}((x_1,x_2))$, or
$f$ is almost H\"older$(\beta)$, if
$f\in \mathcal{C}^{\beta-\varepsilon}((x_1,x_2))$ for every
$\varepsilon\in (0,\beta)$.

The main tool for establishing  H\"older regularity of
random processes is  the Kolmogorov continuity criterion:
\begin{theorem}
 \label{th:KCC}
 Let $T$ be a positive real number and $X=X(t)$, a real-valued random process
 on $[0,T]$.  If there exist
numbers $C>0$,  $p>1$, and  $q\geq p$ such that, for all $t,s\in [0,T]$,
\begin{equation*}
\label{mean-cont0}
\bE |X(t)-X(s)|^q\leq C|t-s|^p,
\end{equation*}
then there exists  a modification of $X$ with
 sample trajectories  that are almost H\"{o}lder$\big((p-1)/q\big)$.
 \end{theorem}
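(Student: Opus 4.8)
The plan is to run the classical dyadic chaining argument. First I would reduce to the case $T=1$ by rescaling time, which is harmless since the Hölder exponent is scale-invariant and the constant $C$ only picks up a factor of $T^p$. Fix any exponent $\gamma\in\big(0,(p-1)/q\big)$; it suffices to produce a single modification whose trajectories are Hölder$(\gamma)$ for every such $\gamma$, since this is exactly the meaning of almost Hölder$\big((p-1)/q\big)$. Let $D_n=\{k/2^n:0\le k\le 2^n\}$ and $D=\bigcup_n D_n$ be the dyadic rationals in $[0,1]$.

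First I would apply Markov's inequality to the hypothesis to get, for each level $n$ and each consecutive pair,
\[
\mathbb{P}\big(|X((k+1)/2^n)-X(k/2^n)|\ge 2^{-n\gamma}\big)
\le 2^{n\gamma q}\,\bE|X((k+1)/2^n)-X(k/2^n)|^q
\le C\,2^{-n(p-\gamma q)}.
\]
A union bound over the $2^n$ consecutive pairs at level $n$ then yields
\[
\mathbb{P}\Big(\max_{0\le k<2^n}|X((k+1)/2^n)-X(k/2^n)|\ge 2^{-n\gamma}\Big)
\le C\,2^{-n(p-1-\gamma q)}.
\]
Because $\gamma q<p-1$, the exponent $p-1-\gamma q$ is strictly positive, so the series over $n$ converges; by the Borel--Cantelli lemma there is an almost surely finite random index $N$ beyond which $\max_k|X((k+1)/2^n)-X(k/2^n)|<2^{-n\gamma}$ for all $n\ge N$.

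The heart of the argument is the chaining step. I would show that if the increment bound $|X((k+1)/2^n)-X(k/2^n)|<2^{-n\gamma}$ holds at every level $n\ge N$, then for any two dyadic points $s<t$ with $t-s<2^{-N}$ one has $|X(t)-X(s)|\le K|t-s|^\gamma$ with a deterministic constant $K=K(\gamma)$. This is carried out by writing $s$ and $t$ in binary, connecting them through a telescoping chain of dyadic points whose mesh halves at each successive step, and summing the geometric series $\sum_{n>m}2^{-n\gamma}$, where $m$ is the integer with $2^{-m-1}\le t-s<2^{-m}$. The sum collapses to a constant multiple of $2^{-m\gamma}\le 2^{\gamma}|t-s|^\gamma$, giving an a.s. Hölder$(\gamma)$ estimate for the restriction of $X$ to $D$.

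Finally I would extend and identify the modification. Since $X|_D$ is a.s. uniformly continuous, it extends uniquely to a continuous process $\widetilde{X}$ on $[0,1]$, which is automatically Hölder$(\gamma)$. To see that $\widetilde{X}$ is a modification of $X$, observe that the hypothesis forces $\bE|X(t)-X(s)|^q\to 0$ as $s\to t$, so $X$ is continuous in $L^q$ and hence in probability; choosing dyadic $t_n\to t$ gives $X(t_n)\to X(t)$ in probability while $X(t_n)\to\widetilde{X}(t)$ almost surely by construction, so $\widetilde{X}(t)=X(t)$ a.s. for each fixed $t$. As the extension $\widetilde{X}$ does not depend on the choice of $\gamma$, the same process is Hölder$(\gamma)$ for every $\gamma<(p-1)/q$, i.e. almost Hölder$\big((p-1)/q\big)$. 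I expect the main obstacle to be the bookkeeping in the chaining step: tracking which dyadic levels are traversed and verifying that the telescoping sum collapses to the clean power $|t-s|^\gamma$ with a constant independent of the pair $(s,t)$.
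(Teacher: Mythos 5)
Your proposal is correct: it is the standard Kolmogorov--Chentsov dyadic chaining argument (Markov bound on level-$n$ increments, Borel--Cantelli, telescoping chain over dyadics, extension by uniform continuity, identification via continuity in probability), and all the exponent bookkeeping checks out, since $\gamma q<p-1$ makes the union-bounded series summable. The paper does not prove the theorem itself but simply cites Karatzas and Shreve, Theorem 2.2.8, which is exactly the argument you have reproduced, so your route coincides with the intended one; the only difference is that yours is self-contained.
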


 \begin{proof}
 See, for example Karatzas and Shreve \cite[Theorem 2.2.8]{KarShr}.
 \end{proof}

We now apply Theorem \ref{th:KCC} to the solution of equation
\eqref{eq:add-n}.

\begin{theorem}
\label{th:AdN-reg}
The random field $U=U(t,x)$ defined in \eqref{eq:add-n-sol}
satisfies
\begin{align}
\label{AddN-time}
U(\cdot,x)&\in \mathcal{C}^{3/4-}((0,T)),\ x\in [0,\pi],\ T>0;\\
\label{AddN-tx}
U_x(\cdot,x)& \in \mathcal{C}^{1/4-}((0,T)),\ x\in [0,\pi],\ T>0;\\
\label{AddN-x}
U(t,\cdot)& \in \mathcal{C}^{3/2-}((0,\pi)),\ t>0.
\end{align}
\end{theorem}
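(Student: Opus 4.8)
The plan is to exploit the explicit Gaussian series representations \eqref{eq:add-n-sol} and \eqref{AddN-Ux} together with the Kolmogorov criterion (Theorem \ref{th:KCC}). Every increment of $U$ or $U_x$ — whether in $t$ or in $x$ — is a mean-zero Gaussian random variable, namely a convergent linear combination of the independent Gaussians $\zeta_0,\zeta_k$. Hence the Gaussian moment identity $\bE|Z|^q=c_q\big(\bE|Z|^2\big)^{q/2}$ reduces every moment bound to a single variance estimate, and applying Theorem \ref{th:KCC} with $q\to\infty$ turns a bound $\bE|Z|^2\le C|h|^{2H}$ into almost H\"older$(H)$ regularity. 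Using $\mathrm{Var}(\zeta_0)=\pi$, $\mathrm{Var}(\zeta_k)=\pi/2$, and independence, each variance collapses to an explicit series in $k$, so the whole proof amounts to bounding three such series.

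For the time regularity \eqref{AddN-time} I fix $x$ and compute, from \eqref{eq:add-n-sol},
$$
\bE\big|U(t,x)-U(s,x)\big|^2
=\frac{(t-s)^2}{\pi}
+\frac{2}{\pi}\sum_{k\geq 1}k^{-4}\big(e^{-k^2 s}-e^{-k^2 t}\big)^2\cos^2(kx).
$$
The elementary bound $\big|e^{-k^2 s}-e^{-k^2 t}\big|\le\min\!\big(1,k^2|t-s|\big)$ gives $k^{-4}(e^{-k^2s}-e^{-k^2t})^2\le k^{-4}\min(1,k^4|t-s|^2)$; splitting the sum at $k\sim|t-s|^{-1/2}$ and comparing each piece with an integral yields $\le C|t-s|^{3/2}$ (the $\zeta_0$ term is $O(|t-s|^2)$ and absorbed). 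Theorem \ref{th:KCC} then produces almost H\"older$(3/4)$. The derivative-in-time estimate \eqref{AddN-tx} is identical in spirit: from \eqref{AddN-Ux},
$$
\bE\big|U_x(t,x)-U_x(s,x)\big|^2
=\frac{2}{\pi}\sum_{k\geq 1}k^{-2}\big(e^{-k^2 s}-e^{-k^2 t}\big)^2\sin^2(kx),
$$
and the same min-bound with threshold $k\sim|t-s|^{-1/2}$ now gives $\le C|t-s|^{1/2}$, hence almost H\"older$(1/4)$.

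For the space regularity \eqref{AddN-x} I fix $t$ and, since $\mathcal{C}^{3/2}=\mathcal{C}^{1+1/2}$, show that the formal spatial derivative $U_x(t,\cdot)$ is almost H\"older$(1/2)$. From \eqref{AddN-Ux},
$$
\bE\big|U_x(t,x)-U_x(t,y)\big|^2
=\frac{2}{\pi}\sum_{k\geq 1}k^{-2}\big(1-e^{-k^2 t}\big)^2\big(\sin(kx)-\sin(ky)\big)^2,
$$
and using $(1-e^{-k^2t})^2\le 1$ together with $\big|\sin(kx)-\sin(ky)\big|\le\min(2,k|x-y|)$, the split at $k\sim|x-y|^{-1}$ gives $\le C_t|x-y|$, so Theorem \ref{th:KCC} yields almost H\"older$(1/2)$ for $U_x(t,\cdot)$.

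The three series estimates are routine; the only genuinely delicate point is the last claim, because almost H\"older$(3/2)$ asserts the existence and H\"older regularity of a \emph{classical} first derivative, so one cannot apply the Kolmogorov criterion to $U$ directly. I therefore still have to confirm that the $U_x$ of \eqref{AddN-Ux} really is $\partial_x U(t,\cdot)$: integrating the $U_x$-series term by term from $0$ to $x$ recovers the $U$-series in \eqref{eq:add-n-sol}, and this, combined with the continuity of $U_x$ just established, legitimizes the fundamental theorem of calculus and gives $U(t,\cdot)\in\mathcal{C}^{3/2-}$.
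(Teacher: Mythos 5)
Your proposal is correct and follows essentially the same route as the paper: reduce each claim to a second-moment bound on Gaussian increments via the explicit series \eqref{eq:add-n-sol}--\eqref{AddN-Ux} and the Kolmogorov criterion, treating \eqref{AddN-x} through the H\"older$(1/2-)$ regularity of $U_x$. The only (harmless) differences are technical: you bound the series by the $\min$-and-split argument where the paper uses $1-e^{-\theta}\leq\theta^{\alpha}$, $\sin\theta\leq\theta^{\alpha}$ with $\alpha$ just below critical, and you absorb the $\zeta_0 t/\pi$ term rather than subtracting it off as the paper does; your closing remark that $U_x$ must be identified as the classical derivative is a point the paper leaves implicit.
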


\begin{proof}
For every $t>0$ and $x,y\in [0,\pi]$, the random variables
$\tilde{U}(t,x)=U(t,x)-\zeta_0t/\pi$ and $U_x(t,x)$
are  Gaussian, so that, by Theorem \ref{th:KCC}, statements
\eqref{AddN-time}, \eqref{AddN-tx}, and \eqref{AddN-x} will
follow from
\begin{align}
\label{AddN-timeE}
\bE|\tilde{U}(t+h,x)-\tilde{U}(t,x)|^2\leq &C(\varepsilon) h^{3/2-\varepsilon},\
\varepsilon\in (0,3/2),\\
\label{AddN-txE}
\bE|U_x(t+h,x)-U_x(t,x)|^2\leq &C(\varepsilon) h^{1/2-\varepsilon},\
\varepsilon\in (0,1/2),\\
\label{AddN-xE}
\bE|U_x(t,x+h)-U_x(t,x)|^2\leq &C(\varepsilon) h^{1-\varepsilon},\
\varepsilon\in (0,1),
\end{align}
respectively, if we use $p=q\delta/2$ with suitable $\delta$ and
sufficiently large $q$.

Using \eqref{eq:add-n-sol} and \eqref{AddN-Ux}, and keeping in
mind that $\zeta_k,\  k\geq 1,$ are iid  Gaussian with  mean zero
and variance $\pi/2$,
\begin{align}
\label{AddN-timeE1}
\bE|\tilde{U}(t+h,x)-\tilde{U}(t,x)|^2=&\frac{2}{\pi}
\sum_{k\geq 1} k^{-4}e^{-2k^2t}(1-e^{-k^2h})^2\cos^2(kx)
;\\
\label{AddN-txE1}
\bE|U_x(t+h,x)-U_x(t,x)|^2=&\frac{2}{\pi}
\sum_{k\geq 1} k^{-2}e^{-2k^2t}(1-e^{-k^2h})^2\cos^2(kx);\\
\label{AddN-xE1}
\bE|U_x(t,x+h)-U_x(t,x)|^2=&\frac{2}{\pi}
\sum_{k\geq 1} k^{-2}(1-e^{-k^2h})^2\big(\sin(k(x+h))-\sin(kx)\big)^2.
\end{align}
We also use
\begin{align}
\label{ineq-exp}
1-e^{-\theta}\leq \theta^{\alpha}, \ 0<\alpha\leq 1,\ \theta>0,\\
\label{ineq-trig}
\ \sin \theta\leq \theta^{\alpha},\ 0<\alpha\leq 1,\ \theta>0.
\end{align}
Then
\begin{itemize}
\item Inequality \eqref{AddN-timeE} follows from
\eqref{IntComp-g}, \eqref{AddN-timeE1},
and \eqref{ineq-exp} by taking $\alpha<3/4$;
\item Inequality \eqref{AddN-txE} follows from
\eqref{IntComp-g}, \eqref{AddN-txE1},
and \eqref{ineq-exp} with $\alpha<1/4$;
\item Inequality \eqref{AddN-xE} follows from
\eqref{IntComp-g}, \eqref{AddN-xE1},
and \eqref{ineq-trig} with $\alpha<1/2$.
\end{itemize}
\end{proof}

\begin{remark}
\label{rem:compensate}
Similar to \cite[Theorem 3.3]{DK14} in the case of space-time
white noise,  equalities \eqref{eq:add-n-sol} and \eqref{AddN-Ux} imply
that, for every $t>0$, the random field $U(t,x)$ is infinitely differentiable
in $t$, and the random field $U_x(t,x)+B(x)$ is infinitely differentiable in
$x$, where
$$
B(x)=\frac{2}{\pi} \sum_{k\geq 1} k^{-1}\zeta_k\sin(kx)
$$
is a Bronwian bridge on $[0,\pi]$.
\end{remark}

\section{Time Regularity of the Chaos Solution}
\label{sec:Time}

The objective of this section is to show that the chaos solution of
\eqref{eq:main} has a modification that is almost H\"older$(3/4)$
in time.  To simplify the presentation,
we will not distinguish different modifications of the solution.

\begin{theorem}
\label{th:TimeReg}
If $u_0\in \mathcal{C}^{3/2}((0,\pi))$, then the
chaos solution of \eqref{eq:main} satisfies
$$
u(\cdot,x)\in \mathcal{C}^{3/4-}\left((0,T)\right)$$
for every $T>0$ and $x\in [0,\pi]$.
\end{theorem}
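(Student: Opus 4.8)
The plan is to apply the Kolmogorov continuity criterion (Theorem \ref{th:KCC}) to the process $t\mapsto u(t,x)$ with $x\in[0,\pi]$ fixed. The key estimate I aim for is: for every $q>1$ and every $\varepsilon\in(0,3/2)$ there is $C=C(q,\varepsilon,x,T)$ with
\[
\sum_{n\ge 0} q^{n}\sum_{|\ba|=n}\big|u_{\ba}(t+h,x)-u_{\ba}(t,x)\big|^{2}\le C\,h^{3/2-\varepsilon},\qquad 0<t<t+h\le T.
\]
Given this, for any integer $m$ choose $q>m-1$; Proposition \ref{prop:LcInLp} applied to the increment $u(t+h,x)-u(t,x)$, whose chaos norm is the square root of the left-hand side above, yields $\bE|u(t+h,x)-u(t,x)|^{m}\le C'\,h^{3m/4-\varepsilon m/2}$. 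Feeding this into Theorem \ref{th:KCC} produces a modification whose Hölder exponent tends to $3/4$ as $m\to\infty$ and $\varepsilon\to 0$, which is the assertion.

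The term $n=0$ is deterministic: $u_{\zm}(t,x)=\Phi_t u_0(x)$, and its time increment is controlled by the smoothness of the data. Since $u_0\in\mathcal{C}^{3/2}((0,\pi))$, the spectral representation \eqref{eq:kernel1} (equivalently, parabolic scaling) gives $|u_{\zm}(t+h,x)-u_{\zm}(t,x)|\le Ch^{3/4}$, so this term is harmless; it is precisely here that the hypothesis $u_0\in\mathcal{C}^{3/2}$ is needed to match $3/4$ time regularity. For $n\ge 1$ I use the Fourier representation \eqref{FCAlfa}: up to the factor $\sqrt{n!}$, $u_{\ba}(t,x)$ is the Fourier coefficient of $F_n(t,x;\cdot)$ from \eqref{Fn} against the orthonormal system $\{\mathfrak{e}_{\ba}\}_{|\ba|=n}$, so Bessel's inequality (as in Remark \ref{rem:sym}) gives
\[
\sum_{|\ba|=n}\big|u_{\ba}(t+h,x)-u_{\ba}(t,x)\big|^{2}\le n!\,\big\|G_n\big\|_{L_2((0,\pi)^n)}^{2},\qquad G_n:=F_n(t+h,x;\cdot)-F_n(t,x;\cdot).
\]
Thus everything reduces to bounding $\|G_n\|_{L_2((0,\pi)^n)}^2$ by $h^{3/2-\varepsilon}$ times a constant summable against $q^n n!$.

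To estimate $\|G_n\|^2$ I split $G_n$ according to the two appearances of $t$ in \eqref{Fn}: a \emph{domain} part $A_n$, from the extra time slice $s_n\in(t,t+h)$ in $\bT^n_{0,t+h}\setminus\bT^n_{0,t}$, and a \emph{kernel} part $B_n$, from the increment $\hker(t+h-s_n,x,y_n)-\hker(t-s_n,x,y_n)$ of the leading heat kernel integrated over the common simplex $\bT^n_{0,t}$, so that $\|G_n\|^2\le 2\|A_n\|^2+2\|B_n\|^2$. For each piece I square, integrate out the spatial variables $y_1,\dots,y_n$ by repeatedly applying the semigroup property \eqref{semigr} and the pointwise bound \eqref{eq:hker0}, exactly as in the passage from \eqref{mod-a-gen-tx} to the double time integrals in the proof of Theorem \ref{th:rf}; here $u_0\in\mathcal{C}^{3/2}$ (in particular $u_0$ bounded) lets me replace $u_{\zm}(s_1,y_1)$ by $\|u_0\|_{\infty}$ and so avoid the $s_1^{-1/2}$ singularity present in Theorem \ref{th:rf}. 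The domain part reduces to a double time integral whose outer variables $s_n,r_n$ both lie in $(t,t+h)$; integrating the factor $(2(t+h)-s_n-r_n)^{-1/2}$ over this box produces the power $h^{3/2}$ outright, the remaining iterated integrals being of the type \eqref{ab-Int}. For the kernel part I extract the decay from the leading-kernel increment by interpolating the spectral representation \eqref{eq:kernel1}, using $|e^{-(k-1)^2 h}-1|\le((k-1)^2 h)^{\alpha}$ with $\alpha$ slightly below $3/4$ and summing in $k$ via \eqref{IntComp-g}; this yields the factor $h^{2\alpha}=h^{3/2-\varepsilon}$. In both cases the leftover iterated time integrals are evaluated by the Beta identity \eqref{beta} and dominated by the integrals $I_n$ of \eqref{ab-Int}, whose factorial bound \eqref{ab-Int-1} makes $\sum_n q^n n!\|G_n\|^2$ convergent for every $q$.

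The crux is the kernel part $B_n$, where one must squeeze the full $h^{3/2-\varepsilon}$—equivalently, the optimal exponent $3/4$ rather than the $1/2$ obtained in earlier work—out of a single heat-kernel increment while keeping the resulting time integrand integrable at the diagonal. The tension is that the exponent $\alpha$ producing the power of $h$ simultaneously sharpens the time-singularity of the leading kernel after the spatial integration, and it is exactly the condition $\alpha<3/4$—the same bookkeeping that made $4-4\alpha>1$ decisive in the additive computation \eqref{AddN-timeE1}—that keeps this singularity integrable and thereby pins the optimal time exponent at $3/4$. A secondary technical point is to carry out the $k$-summation and the spatial collapse without destroying the factorial gain needed for summability in $n$; this is where removing the $s_1^{-1/2}$ weight via boundedness of $u_0$, together with the estimates \eqref{ab-Int}–\eqref{ab-Int-1}, is essential.
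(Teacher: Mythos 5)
Your proposal is correct and follows essentially the same route as the paper: Bessel's inequality to reduce to $n!\|F_n(t+h,x;\cdot)-F_n(t,x;\cdot)\|_{L_2((0,\pi)^n)}^2$, the same domain/kernel split of the increment, collapse of the spatial variables via the semigroup property and \eqref{eq:hker0}, the bound $|e^{-k^2h}-1|\le (k^2h)^{\gamma}$ with $\gamma<3/4$ summed via \eqref{IntComp-g}, the integrals \eqref{ab-Int}--\eqref{ab-Int-1} for summability in $n$, and hypercontractivity (Proposition \ref{prop:LcInLp}) feeding into Theorem \ref{th:KCC}. The only cosmetic difference is the $n=0$ term, where the paper invokes the parabolic Schauder estimate of \cite{LSU} applied to $\Lambda^{-1}u_0$ rather than a direct spectral argument, but the conclusion \eqref{det-time-reg} is the same.
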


\begin{proof}  We need to show that, for every $x\in [0,\pi]$,
$h\in(0,1)$, $\varepsilon\in (0,3/4)$, $t\in (0,T)$, and
$p\in (1,+\infty)$,
$$
\Big(\bE|u(t+h,x)-u(t,x)|^p\Big)^{1/p}  \leq C(p,T,\varepsilon) h^{3/4-\varepsilon}.
$$
Then the statement of the theorem will follow Theorem \ref{th:KCC}.

Recall that  $u_{(\bold{0})}(t,x)$ is the solution of
$$
\frac{\partial u_{(\bold{0})}(t,x)}{\partial t}
=\frac{\partial^2u_{(\bold{0})}(t,x)}{\partial x^2},\ u_{(\bold{0})}(0,x)=u_0(x),
$$
with boundary conditions
$$
\frac{\partial u_{(\bold{0})}(t,0)}{\partial x}=
\frac{\partial u_{(\bold{0})}(t,\pi)}{\partial x}=0,
$$
that is,
$$
\frac{\partial u_{(\bold{0})}(t,x)}{\partial t}=(1-\Lambda^2)u_{(\bold{0})}(t,x);
$$
 the operator $\Lambda$ is defined in \eqref{LambdaOp}.
Applying  \cite[Theorem 5.3]{LSU} to equation
$$
U_t(t,x)=(1-\Lambda^2)U(t,x),\ U(0,x)=\Lambda^{-1}u_0(x),
$$
we conclude that, for each $x\in [0,\pi]$,
\bel{det-time-reg}
u_{(\bold{0})}(\cdot,x)\in \mathcal{C}^{3/4}((0,T)).
\ee

For $n\geq 1$ and $h\in (0,1)$, similar to \eqref{mod-a-gen-tx},
\begin{equation}
\label{eq:Timedifference}
\begin{split}
&\sum_{|\boldsymbol{\alpha}|=n}
\left|u_{\boldsymbol{\alpha}}(t+h,x)-u_{\boldsymbol{\alpha}}(t,x)\right|^2\\
\leq &
n!\int_{(0,\pi)^n}
\Bigg(\int_{\mathbb{T}_{0,t+h}^n}
\hker(t+h-s_n,x,y_n)\cdots \hker(s_{2}-s_1,y_{2},y_1)u_{(\bold{0})}(s_1,y_1)
ds^n\\
-&
\int_{\mathbb{T}_{0,t}^n}
\hker(t-s_n,x,y_n)\cdots \hker(s_{2}-s_1,y_{2},y_1)u_{(\bold{0})}(s_1,y_1)
ds^n\Bigg)^2 dy^n.
\end{split}
\end{equation}

We add and subtract
$$
\int_{\mathbb{T}_{0,t}^n} \hker(t+h-s_n,x,y_n)
\cdots \hker(s_2-s_1,y_2,y_1)u_{(\bold{0})}(s_1,y_1)ds^n
$$
inside the square on the right-hand side of \eqref{eq:Timedifference},
and then use $(p+q)^2 \leq 2p^2+2q^2$ to re-write
\eqref{eq:Timedifference} as
\begin{equation}
\label{eq:Timedifferenceineq}
\begin{split}
&\sum_{|\boldsymbol{\alpha}|=n}
\left|u_{\boldsymbol{\alpha}}(t+h,x)-u_{\boldsymbol{\alpha}}(t,x)\right|^2\\
\leq& 2n!\int_{(0,\pi)^n}
\Bigg(\int_t^{t+h}\int_{\mathbb{T}_{0,s_n}^{n-1}}
\hker(t+h-s_n,x,y_n)\cdots \hker(s_{2}-s_1,y_{2},y_1)
u_{(\bold{0})}(s_1,y_1)
ds^n\Bigg)^2dy^n\\
+&
2n!\int_{(0,\pi)^n}\Bigg(\int_{\mathbb{T}_{0,t}^n}
\Big[\hker(t+h-s_n,x,y_n)-\hker(t-s_n,x,y_n)\Big]
\hker(s_n-s_{n-1},y_n,y_{n-1})\\
&
\cdots \hker(s_{2}-s_1,y_{2},y_1)u_{(\bold{0})}(s_1,y_1)
ds^n\Bigg)^2 dy^n.
\end{split}
\end{equation}
To estimate the first term on the right-hand side of \eqref{eq:Timedifferenceineq},
 we follow  computations
similar to \eqref{aux-time} and \eqref{aux-prf1}, and
use
\bel{time-reg-aux000}
\hker(t,x,y)\geq 0,\ \
\|u_{\mathbf{(0)}}(s,\cdot)\|_ {L_{\infty}((0,\pi))} \leq \|u_0\|_{L_{\infty}((0,\pi))},\ \
s\geq 0,
\ee
as well as \eqref{ab-Int-1}:
\begin{equation}
\label{Time-FirstTerm}
\begin{split}
&2n!\int\limits_{(0,\pi)^n}
\Bigg(\int\limits_t^{t+h}\int\limits_{\mathbb{T}_{0,s_n}^{n-1}}
\hker(t+h-s_n,x,y_n)\cdots \hker(s_{2}-s_1,y_{2},y_1)
u_{(\bold{0})}(s_1,y_1)
ds^n\Bigg)^2dy^n\\
\leq& 2n!\|u_{0}\|_{L_{\infty}((0,\pi))}^2
\Bigg(\int\limits_t^{t+h}\!\int\limits_{\mathbb{T}_{0,s_n}^{n-1}}\!\!
(t+h-s_n)^{-1/4} (s_n-s_{n-1})^{1/4}\cdots (s_2-s_1)^{-1/4}
ds^n\Bigg)^2\!\!dy^n\\
\leq& 2n!\|u_{0}\|_{L_{\infty}((0,\pi))}^2
\Bigg(
\int\limits_t^{t+h}(t+h-s_n)^{-1/4} I_{n-1}(s_n;1/4,1/4)ds_n
\Bigg)^2\\
&\leq
 \|u_{0}\|_{L_{\infty}((0,\pi))}^2 \, C^n(t)  n^{-n/2} \, h^{3/2},
\end{split}
\end{equation}
with a suitable  $C(t)$.

To estimate the second term on the right-hand side of \eqref{eq:Timedifferenceineq},
define
\begin{equation*}
\begin{split}
\mathcal{I}(t,h,s,r,x)=\int_0^{\pi}& \Big(\hker(t+h-s,x,y)-\hker(t-s,x,y)\Big)\\
&\times \Big(\hker(t+h-r,x,y)-\hker(t-r,x,y)\Big)dy.
\end{split}
\end{equation*}

By \eqref{eq:kernel1},
$$
\mathcal{I}(t,h,s,r,x)
=\frac{4}{\pi^2}\sum_{k\geq 1} (e^{-k^2h}-1)^2
e^{-k^2(t-s)-k^2(t-r)}\cos^2(kx).
$$
Using \eqref{ineq-exp} and taking
$0<\gamma<3/4$, we conclude that
$$
\mathcal{I}(t,h,s,r,x)\leq
h^{2\gamma}\sum_{k\geq 1} k^{4\gamma}e^{-k^2(2t-s-r)}.
$$
Then \eqref{IntComp-g} implies
$$
\mathcal{I}(t,h,s,r,x)\leq
(4\gamma+1)^{4\gamma+1}\, h^{2\gamma}\,(2t-s-r)^{-(1/2+2\gamma)}.
$$

Note that  $1/2+2\gamma<2$.

We now carry out  computations similar to  \eqref{aux-prf1}, and
use \eqref{ab-Int-1} and  \eqref{time-reg-aux000}:
\begin{equation}
\label{eq:2ndTerm}
\begin{split}
&2n!\int\limits_{(0,\pi)^n}\Bigg(\ \int\limits_{\mathbb{T}_{0,t}^n}
\Big[\hker(t+h-s_n,x,y_n)-\hker(t-s_n,x,y_n)\Big]\\
&
\times \hker(s_{2}-s_1,y_{2},y_1)
\cdots \hker(s_{2}-s_1,y_{2},y_1)u_{(\bold{0})}(s_1,y_1)
ds^n\Bigg)^2 dy^n\\
&\leq
2n!\|u_{0}\|_{L_{\infty}((0,\pi))}^2 (1+\sqrt{t})^{2n}\\
&\iint\limits_{\mathbb{T}^{n}_{0,t}\times\mathbb{T}^{n}_{0,t}}
\!\!\!\mathcal{I}(t,h,s_n,r_n,x)
\prod^{n-2}_{k=1}(s_{k+1}+r_{k+1}-s_k-r_k)^{-1/2}
(s_{1}+r_{1}-2s)^{-1/2}ds^ndr^n\\
&\leq h^{2\gamma} \|u_{0}\|_{L_{\infty}((0,\pi))}^2
\frac{4(4\gamma+1)^{4\gamma+1}}{\pi}\, (1+\sqrt{t})^{2n}\,
n!\,I_n^2\big(t;1/4+\gamma, 1/4\big)\\
&\leq
\|u_{0}\|_{L_{\infty}((0,\pi))}^2 C^n(t) n^{-n/2} \, h^{2\gamma},
\end{split}
\end{equation}
with a suitable  $C(t)$.

 Combining \eqref{Time-FirstTerm} and \eqref{eq:2ndTerm},
\bel{t-incr}
\sum_{|\boldsymbol{\alpha}|=n}
\left|u_{\boldsymbol{\alpha}}(t+h,x)-
u_{\boldsymbol{\alpha}}(t,x)\right|^2\leq h^{3/2-2\varepsilon}\,
\|u_{0}\|_{L_{\infty}((0,\pi))}^2C^n(t,\varepsilon)n^{-n/2},
\ee
$\varepsilon \in (0,3/4),\ n\geq 1,$ and then, by \eqref{det-time-reg} and Proposition \ref{prop:LcInLp},
\begin{equation*}
\begin{split}
\Big(\mathbb{E}\left|u(t+h,x)-u(t,x)\right|^p\Big)^{1/p}
\leq&\sum_{n=0}^{\infty}(p-1)^{n/2}\left(\sum_{|\boldsymbol{\alpha}|=n}
\left|u_{\boldsymbol{\alpha}}(t+h,x)-
u_{\boldsymbol{\alpha}}(t,x)\right|^2\right)^{1/2}\\
\leq& C(p,T,\varepsilon)\|u_{0}\|_{L_{\infty}((0,\pi))}\,h^{3/4-\varepsilon},
\end{split}
\end{equation*}
for all $1< p<+\infty,\ t\in (0,T),\ \varepsilon \in (0,3/4),\ 0<h<1$,
 completing the proof of Theorem \ref{th:TimeReg}.

\end{proof}

\section{Space Regularity of the Chaos Solution}
\label{sec:Space}

The objective of this section is to show that, for every $t>0$,
 the chaos solution
$$
u(t,x)=\sum_{\ba\in \mathcal{J}} u_{\ba}(t,x)\xi_{\ba}
$$
of \eqref{eq:main} has a modification that is
in $\mathcal{C}^{3/2-}((0,\pi))$.
As in the previous section, we will not distinguish between different modifications of $u$.

To streamline the presentation, we will break the argument in two parts:
existence  of $u_x$ as a random field, followed by H\"{o}lder$(1/2-)$
regularity of $u_x$ in space.

Define
$$
v(t,x)=\sum_{\ba\in \mathcal{J}} v_{\ba}(t,x)\xi_{\ba},
$$
where
\begin{equation*}
\label{eq:va-gen}
\begin{split}
v_{\ba}\left(t,x\right) &=\frac{\partial u_{\ba}(t,x)}{\partial x}\\
&=
\frac{1}{\sqrt{\ba!}}\int_{(0,\pi)^n}
\sum_{\sigma
\in{\mathcal{P}}_{n}}\int_{0}^{t}
\int_{0}^{s_{n}}\ldots\int_{0}^{s_{2}}
 \hker_x(t-s_{n},x,y_n)\mfk{m}_{k_\sigma(n)}(y_n)\\
\times &\hker(s_{n}-s_{n-1},y_n,y_{n-1})\mfk{m}_{k_\sigma(n-1)}
\cdots\hker(s_{2}-s_{1},y_2,y_1)\mfk{m}_{k_\sigma(1)}(y_1)
u_0(s_1,y_1) \, ds^ndy^n.
\end{split}
\end{equation*}

\begin{theorem}
Assume that $u_0 \in L_p((0,\pi))$ for some $1\leq p\leq \infty$.
Then, for every $t>0$ and $x\in (0,\pi) $,
$$
u_x(t,x) \in \bigcap_{q>1} L_{2,q}(W;\mathbb{R}).
$$
\end{theorem}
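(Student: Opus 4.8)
The plan is to repeat the argument of Theorem~\ref{th:rf} almost verbatim, with the outermost heat kernel replaced by $\hker_x$, and to show that a single spatial derivative costs only a borderline-integrable amount of extra singularity. First I would observe that, by the representation of $v_{\ba}$ displayed just before the theorem together with the computation leading to \eqref{FCAlfa}, one has $v_{\ba}(t,x)=\sqrt{n!}\int_{(0,\pi)^n}G_n\,\mathfrak{e}_{\ba}\,dy^n$, where $G_n$ is obtained from $F_n$ in \eqref{Fn} by replacing the outermost kernel $\hker(t-s_n,x,y_n)$ with $\hker_x(t-s_n,x,y_n)$. Hence the Bessel-inequality step of Proposition~\ref{prop-WCNormA} applies unchanged and gives
\[
\sum_{|\ba|=n}|v_{\ba}(t,x)|^2\le n!\,\|G_n\|_{L_2((0,\pi)^n)}^2 .
\]
Expanding $\|G_n\|^2$ as a double integral over $\bT^n_{0,t}\times\bT^n_{0,t}$ in time and over $(0,\pi)^n$ in space, bounding $u_{\zm}(s_1,y_1)u_{\zm}(r_1,y_1)$ by $C\,s_1^{-1/2}r_1^{-1/2}\|u_0\|_{L_p((0,\pi))}^2$ via \eqref{RF-2}, and passing to absolute values, I obtain the exact analogue of \eqref{mod-a-gen-prf} with $|\hker_x(t-s_n,x,y_n)|$ and $|\hker_x(t-r_n,x,y_n)|$ in place of the two outermost kernels.

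Since these two factors depend only on $y_n$, I would pull them out of the integrations in $y_1,\dots,y_{n-1}$. Those inner integrations involve only the nonnegative kernel $\hker$, so repeated use of the semigroup identity \eqref{semigr} together with the diagonal bound \eqref{eq:hker0}, exactly as in the proof of Theorem~\ref{th:rf}, produces the factor $s_1^{-1/2}r_1^{-1/2}\prod_{k=2}^{n}(s_k+r_k-s_{k-1}-r_{k-1})^{-1/2}$, uniformly in $y_n$. The genuinely new step, and the main obstacle, is the remaining integration in $y_n$, where the semigroup identity is useless because $\hker_x$ does not reproduce under composition. Here I would argue spectrally: from \eqref{eq:kernel1}, as a function of $y$,
\[
\hker_x(\tau,x,y)=-\frac{2}{\pi}\sum_{k\geq1}k\,e^{-k^2\tau}\sin(kx)\cos(ky),
\]
so Parseval's identity and the integral comparison \eqref{IntComp-g} with $r=2$ give $\int_0^\pi\hker_x^2(\tau,x,y)\,dy=\frac{2}{\pi}\sum_{k\geq1}k^2e^{-2k^2\tau}\sin^2(kx)\le C\tau^{-3/2}$. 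The Cauchy--Schwarz inequality in $y_n$ then yields
\[
\int_0^\pi|\hker_x(t-s_n,x,y_n)|\,|\hker_x(t-r_n,x,y_n)|\,dy_n\le C\,(t-s_n)^{-3/4}(t-r_n)^{-3/4}.
\]

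With this estimate the double time integral factorizes just as in \eqref{aux-time-prf}: applying \eqref{ineq_1} to each middle factor $(s_k+r_k-s_{k-1}-r_{k-1})^{-1/2}$, while the outermost factor is already a product, one is left with $n!\,I_n^2(t;3/4,1/2)$, where $I_n$ is the integral from \eqref{ab-Int}. The crucial point is that differentiation raises the top exponent from $1/4$ to $3/4$; since $3/4\in(0,1)$ the integral $I_n(t;3/4,1/2)$ is still finite, and \eqref{ab-Int-1} applies to give
\[
\sum_{|\ba|=n}|v_{\ba}(t,x)|^2\le C^n(t)\,n^{-n/2}\,\|u_0\|_{L_p((0,\pi))}^2 .
\]
From here the conclusion for $u_x(t,x)$ follows by summing the convergent series $\sum_n q^n C^n(t)n^{-n/2}$ for every $q>1$, exactly as \eqref{mod-a-gen-1-int} led to \eqref{eq:Lq} in Theorem~\ref{th:rf}. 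It is worth stressing that $3/4$ is the borderline value: a second derivative would produce $\int_0^\pi\hker_{xx}^2\,dy\sim\tau^{-5/2}$ and a top exponent $5/4>1$, so $I_n$ would diverge. This is precisely why the argument stops at $u_x$ and why the optimal spatial regularity of $u$ is H\"older$(3/2-)$ rather than better.
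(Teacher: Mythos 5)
Your argument is essentially the paper's proof: the same Bessel-inequality reduction, the same $s_1^{-1/2}$ bound on $u_{\zm}$ via \eqref{Cpst}, the same key estimate $\int_0^\pi \hker_x^2(\tau,x,y)\,dy \le C\tau^{-3/2}$, and the same reduction to $n!\,I_n^2(t;3/4,1/2)$ controlled by \eqref{ab-Int-1}. The only (cosmetic) difference is that you handle the $y_n$-integral by Cauchy--Schwarz on $|\hker_x||\hker_x|$, whereas the paper computes the cross term $\int_0^\pi\hker_x(t-s_n,x,y)\hker_x(t-r_n,x,y)\,dy$ exactly by orthogonality and then splits $(2t-s_n-r_n)^{-3/2}\le (t-s_n)^{-3/4}(t-r_n)^{-3/4}$; both yield the same bound.
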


\begin{proof}
By construction, $v=u_x$ as generalized processes. It remains to
show that
\bel{vx-reg}
v(t,x) \in \bigcap_{q>1} L_{2,q}(W;\mathbb{R}).
\ee
Similar to \eqref{mod-a-gen-tx},
\begin{equation}
\label{eq:vxn}
\begin{split}
\sum_{|\ba|=n} |v_{\ba}(t,x)|^2
\leq & n!\int\limits_{(0,\pi)^n}\Bigg(\int\limits_{\mathbb{T}_{0,t}^n} \hker_x(t-s_n,x,y_n)\hker(s_n-s_{n-1},y_n,y_{n-1})\\
\cdots &\hker(s_{2}-s_1,y_{2},y_1)u_{(\bold{0})}(s_1,y_1)ds^n\Bigg)^2dy^n.
\end{split}
\end{equation}
Using \eqref{Cpst},
\begin{equation*}
\begin{split}
&\sum_{|\ba|=n} |v_{\ba}(t,x)|^2\\
\leq& n!\|u_{0}\|_{L_{p}((0,\pi))}^2\pi^2(1+\sqrt{t})^2
\int\limits_{(0,\pi)^n}\!\!\Bigg(\ \int\limits_{\mathbb{T}^{n}_{0,t}}
\hker_x(t-s_{n},x,y_n)\hker(s_{n}-s_{n-1},y_{n},y_{n-1})\\
&\ \ \ \ \ \
\cdots \hker(s_{2}-s_1,y_{2},y_1)s_1^{-1/2}ds^{n}\Bigg)^2dy^n.
\end{split}
\end{equation*}

By \eqref{eq:kernel1},
$$
\int_0^{\pi}\hker_x(t,x,y)\hker_x(s,x,y)dy
=\frac{4}{\pi^2}\sum_{k=1}^{\infty} k^2 e^{-k^2(t+s)}\sin^2(kx)
\leq \frac{27}{(t+s)^{3/2}},
$$
and then
\begin{eqnarray*}
&&\sum_{|\ba|=n} |v_{\ba}(t,x)|^2
\leq 27n! \pi^{2}(1+\sqrt{t})^{2n}
\|u_{0}\|_{L_p((0,\pi))}^2\\
&\times
& \left(\int_{\mathbb{T}^{n}_{0,t}} (t-s_{n-1})^{-3/4}(s_{n-1}-s_{n-2})^{-1/4}
\cdots (s_2-s_1)^{-1/4}s_1^{-1/2}ds^{n}\right)^2ds\\
&=&27\pi^{2}(1+\sqrt{t})^{2n}\|u_{0}\|_{L_p((0,\pi))}^2\
n!\,I_n^2\big(t;3/4,1/2\big)
\leq \|u_{0}\|_{L_p((0,\pi))}^2 C^n(t)n^{-n/2}
\end{eqnarray*}
with a suitable $C(t)$; cf. \eqref{ab-Int-1}. Then \eqref{vx-reg} follows
in the same way as \eqref{eq:Lq-int} followed from
 \eqref{mod-a-gen-1-int}.

\end{proof}

\begin{remark}Similar to the proof of
 Theorem \ref{th:TimeReg}, an interested reader can confirm that
 $u_x(\cdot,x)\in \mathcal{C}^{1/4-}([\delta,T])$ for every $x\in [0,\pi]$
 and $T>\delta>0$.
\end{remark}

\begin{theorem}
\label{th:SpaceReg}
If $u_0\in L_p((0,\pi))$ for some $1\leq p\leq \infty$,
then, for every $t>0$,
$$
u_x(t,\cdot)\in \mathcal{C}^{1/2-}((0,\pi)).
$$
\end{theorem}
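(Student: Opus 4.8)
The plan is to verify the Kolmogorov continuity criterion (Theorem~\ref{th:KCC}) for the spatial increments of $v=u_x$, following the template of the proof of Theorem~\ref{th:TimeReg}. It suffices to show that for every $t>0$, $x,x+h\in(0,\pi)$ with $h\in(0,1)$, $\varepsilon\in(0,1/2)$, and $p\in(1,\infty)$,
\[
\Big(\bE|u_x(t,x+h)-u_x(t,x)|^p\Big)^{1/p}\leq C(p,t,\varepsilon)\,h^{1/2-\varepsilon};
\]
letting $q=p\to\infty$ in Theorem~\ref{th:KCC} then yields almost H\"older$(1/2)$ regularity. The term $|\ba|=0$ is treated separately: since $v_{\zm}(t,\cdot)=\partial_x u_{\zm}(t,\cdot)$ and $|\hker_{xx}(t,x,y)|\leq 27/t^{3/2}$ by Proposition~\ref{prop:hker}, the mean value theorem gives $|v_{\zm}(t,x+h)-v_{\zm}(t,x)|\leq (27/t^{3/2})\|u_0\|_{L_1((0,\pi))}\,h$, so this term is Lipschitz in $x$ and is harmless.

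For $n\geq1$, I would estimate $\sum_{|\ba|=n}|v_{\ba}(t,x+h)-v_{\ba}(t,x)|^2$ by the Bessel-type inequality behind \eqref{eq:vxn}, with the leading factor $\hker_x(t-s_n,x,y_n)$ replaced by the increment $\hker_x(t-s_n,x+h,y_n)-\hker_x(t-s_n,x,y_n)$. Squaring, introducing a second set of variables $r_1<\cdots<r_n$, and using \eqref{Cpst} to bound $u_{\zm}(s_1,y_1)$ and $u_{\zm}(r_1,y_1)$ by constants times $s_1^{-1/2}$ and $r_1^{-1/2}$, I would integrate the spatial variables $y_1,\dots,y_{n-1}$ from the inside out. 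Each such integral collapses, via the symmetry $\hker(\tau,a,b)=\hker(\tau,b,a)$ and the semigroup property \eqref{semigr}, into a diagonal value $\hker(s_{j+1}+r_{j+1}-s_j-r_j,y_{j+1},y_{j+1})$, which \eqref{eq:hker0} bounds by a constant times $(s_{j+1}+r_{j+1}-s_j-r_j)^{-1/2}$. The remaining $y_n$-integral is exactly
\[
\mathcal{I}(t,h,s_n,r_n,x)=\int_0^{\pi}\big(\hker_x(t-s_n,x+h,y)-\hker_x(t-s_n,x,y)\big)\big(\hker_x(t-r_n,x+h,y)-\hker_x(t-r_n,x,y)\big)\,dy.
\]

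The heart of the matter is the estimate of $\mathcal{I}$. From $\hker_x(t,x,y)=-\tfrac{2}{\pi}\sum_{k\geq1}k\,e^{-k^2t}\sin(kx)\cos(ky)$ and the orthogonality of $\{\cos(k\cdot)\}_{k\geq1}$ one obtains $\mathcal{I}(t,h,s_n,r_n,x)=\tfrac{2}{\pi}\sum_{k\geq1}k^2e^{-k^2(2t-s_n-r_n)}\big(\sin(k(x+h))-\sin(kx)\big)^2$. The half-angle identity gives $|\sin(k(x+h))-\sin(kx)|\leq 2|\sin(kh/2)|$, so \eqref{ineq-trig} with $\alpha:=1/2-\varepsilon$ yields $\big(\sin(k(x+h))-\sin(kx)\big)^2\leq C\,k^{2\alpha}h^{2\alpha}$; then \eqref{IntComp-g} with $r=2+2\alpha$ gives $\mathcal{I}\leq C(\alpha)\,h^{2\alpha}(2t-s_n-r_n)^{-(3/2+\alpha)}$. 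Inserting this into the double time integral and splitting each singularity with \eqref{ineq_1} (writing $2t-s_n-r_n=(t-s_n)+(t-r_n)$ and $s_{j+1}+r_{j+1}-s_j-r_j=(s_{j+1}-s_j)+(r_{j+1}-r_j)$) factors it as the square of a single integral over $\bT^n_{0,t}$, namely $I_n^2(t;3/4+\alpha/2,1/2)$ in the notation \eqref{ab-Int}. The constraint $3/4+\alpha/2<1$, i.e. $\alpha<1/2$, is precisely the admissibility condition on the leading exponent in \eqref{ab-Int}; by \eqref{ab-Int-1} we then get $n!\,I_n^2(t;3/4+\alpha/2,1/2)\leq C^n(t)n^{-n/2}$, hence $\sum_{|\ba|=n}|v_{\ba}(t,x+h)-v_{\ba}(t,x)|^2\leq C(\alpha)\,h^{2\alpha}\|u_0\|_{L_p((0,\pi))}^2\,C^n(t)n^{-n/2}$. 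Summing over $n$ against $(p-1)^{n/2}$ as in Proposition~\ref{prop:LcInLp} produces the desired moment bound with exponent $\alpha=1/2-\varepsilon$.

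The main obstacle is obtaining the sharp singularity exponent $3/2+\alpha$ in the bound for $\mathcal{I}$ and recognizing that the borderline condition $\alpha<1/2$ (equivalently $3/4+\alpha/2<1$) is exactly what the integrals \eqref{ab-Int} tolerate; this threshold is what fixes the regularity at almost H\"older$(1/2)$ and nothing larger. The remaining steps are a routine repetition of the kernel-collapsing computations already carried out in Sections~\ref{sec:RCS} and~\ref{sec:Time}.
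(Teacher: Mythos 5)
Your proposal is correct and follows essentially the same route as the paper: the same Bessel-type bound on $\sum_{|\ba|=n}|v_{\ba}(t,x+h)-v_{\ba}(t,x)|^2$, the same kernel-collapsing via the semigroup property, the same key estimate of the cross-integral (the paper's $J$, your $\mathcal{I}$) giving the singularity $(2t-s_n-r_n)^{-(3/2+\gamma)}$ with $\gamma<1/2$, and the same reduction to $n!\,I_n^2(t;3/4+\gamma/2,1/2)$ followed by Proposition \ref{prop:LcInLp}. Your separate Lipschitz treatment of the $|\ba|=0$ term is a minor (and welcome) extra care not spelled out in the paper.
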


\begin{proof}
We continue to use the notation $v=u_x$.
Then the objective is to show that, for every sufficiently small $h>0$ and
every $x\in (0,\pi)$, $t>0$,  $p>1$, and $\gamma \in (0,1/2),$
\bel{KC-vincr}
\Big(\,\bE |v(t,x+h)-v(t,x)|^p \Big)^{1/p}
\leq C(t,p,\gamma) h^{\gamma};
\ee
then the conclusion of the theorem will follow from the Kolmogorov continuity
criterion.

Similar to \eqref{mod-a-gen-tx},
\begin{equation*}
\begin{split}
&\sum_{|\ba|=n}
\left|v_{\ba}(t,x+h)
-v_{\ba}(t,x)\right|^2\\
\leq &
n!\int_{(0,\pi)^n}
\Bigg(\int_{\mathbb{T}^n_{0,t}}
\left[\hker_x(t-s_n,x+h,y_n)
-\hker_x(t-s_n,x,y_n)\right]\\
\times& \hker(s_{n}-s_{n-1},y_{n},y_{n-1}) \cdots
\hker(s_{2}-s_1,y_{2},y_1)u_{(\bold{0})}(s_1,y_1)
ds^n\Bigg)^2 dy^n,
\end{split}
\end{equation*}
and then
\begin{equation}
\label{ux-incr}
\begin{split}
&\sum_{|\ba|=n}
\left|v_{\ba}(t,x+h)-v_{\ba}(t,x)\right|^2\\
\leq& n!\pi^2\big(1+\sqrt{t}\,\big)^2\|u_0\|_{L_p((0,\pi))}^2
\int\limits_{(0,\pi)^n}\!\!
\Bigg(\,\int\limits_{\mathbb{T}^{n}_{0,t}}
\left[\hker_x(t-s_{n-1},x+h,y_n)
-\hker_x(t-s_{n-1},x,y_n)\right]\\
\times& \hker(s_{n-1}-s_{n-2},y_{n},y_{n-1})
\cdots \hker(s_{1}-s,y_{2},y_1)s_1^{-1/2}
ds^{n}\Bigg)^2dy^n;
\end{split}
\end{equation}
cf.  \eqref{eq:vxn}.

Next, define
\begin{equation*}
\begin{split}
&J(t,s,r,x,y,h)\\
&=
\int_0^{\pi} \left(\hker_x(t-s,x+h,y)
-\hker_x(t-s,x,y)\right)
\left(\hker_x(t-r,x+h,y)
-\hker_x(t-r,x,y)\right)dy.
\end{split}
\end{equation*}
From \eqref{eq:kernel1},
\begin{equation*}
\begin{split}
J(t,s,r,x,y,h)
=\frac{2}{\pi}
\sum_{k\geq 1} k^2e^{-k^2(2t-s-r)}
\big(\cos(k(x+h))-\cos(kx)\big)^2.
\end{split}
\end{equation*}

Using
$$
\cos \varphi - \cos \psi = -2\sin((\varphi-\psi)/2)\, \sin ((\varphi+\psi)/2)
$$
and \eqref{ineq-trig},
and taking $\gamma\in (0,1/2)$,
$$
J(t,s,r,x,y,h)\leq 2h^{2\gamma} (2t-s-r)^{-3/2-\gamma}.
$$

Note that
$$
3/2+\gamma<2.
$$

After expanding the square and using  the semigroup property,
\eqref{ux-incr} becomes
\begin{equation}
\label{sp-incr}
\begin{split}
&\sum_{|\ba|=n}
\left|v_{\ba}(t,x+h)
-v_{\ba}(t,x)\right|^2\\
&\leq 2 h^{2\gamma}n!
\pi^2\big(1+\sqrt{t}\,\big)^{2n}\|u_0\|_{L_p((0,\pi))}^2\\
&\iint\limits_{\mathbb{T}^{n}_{0,t}\times \mathbb{T}^{n}_{0,t}}
(2t-s_{n-1}-r_{n-1})^{-3/2-\gamma}
\prod^{n-2}_{k=1}(s_{k+1}+r_{k+1}-s_k-r_k)^{-1/2}
s_{1}^{-1/2}r_1^{-1/2}ds^{n}dr^{n}\\
&\leq 2 h^{2\gamma}\
\pi^2\big(1+\sqrt{t}\,\big)^{2n}\|u_0\|_{L_p((0,\pi))}^2\
n!\,I_n^2\big(t;3/4+(\gamma/2),1/2\big) \leq C^n(t,\gamma)n^{-n/2};
\end{split}
\end{equation}
cf. \eqref{aux-rf1} and \eqref{ab-Int-1}. Then Proposition \ref{prop:LcInLp}
implies \eqref{KC-vincr}, completing the proof of Theorem \ref{th:SpaceReg}.

\end{proof}

\section{The Fundamental Chaos Solution}
\label{sec:FS}

\begin{definition}
The fundamental chaos solution of \eqref{eq:main} is the
collection of functions
$$
\{\mathfrak{P}_{\ba}(t,x,y),\ t>0, \ x,y\in [0,\pi],\ \ba\in \cJ\}
$$
defined by
\begin{equation}
\label{eq:ua-fund-1}
\begin{split}
\mathfrak{P}_{\zm}(t,x,y)&=\hker(t,x,y),\\
\mathfrak{P}_{\ba}(t,x,y)&=\frac{1}{\sqrt{\ba!}}\sum_{\sigma\in \mathcal{P}_n}
\int_{(0,\pi)^n}\int_{\bT^n_{0,t}}
 \hker(t-s_n,x,y_n)\mfk{m}_{k_{\sigma(n)}}(y_n)\cdots\\
&\cdots \hker(s_{2}-s_1,y_2,y_1) \mfk{m}_{k_{\sigma(1)}}(y_1)
\hker(s_1,y_1,y)\, ds^n\, dy^n.
\end{split}
\end{equation}
\end{definition}

The intuition behind this definition is that \eqref{eq:ua-fund-1} is
the chaos solution of \eqref{eq:main} with initial condition $u_0(x)=\delta(x-y)$.
More precisely, it follows from \eqref{eq:ua-gen-1} that if
\bel{eq:FS-Main}
\mathfrak{P}(t,x,y)=\sum_{\ba\in \cJ} \mathfrak{P}_{\ba}(t,x,y)
\xi_{\ba},
\ee
then
\bel{FCS-det}
u(t,x)=\int_0^{\pi}
\mathfrak{P}(t,x,y)u_{0}(y)dy
\ee
is the chaos solution of \eqref{eq:main} with non-random initial condition
$u(0,x)=u_0(x)$. Before
developing these ideas  any further, let us apply the results of
Sections \ref{sec:CSol}--\ref{sec:Space}
 to the random function $\mathfrak{P}$.

\begin{theorem}
\label{prop:FS}
 The function $\mathfrak{P}$ defined by \eqref{eq:FS-Main} has
the following properties:
\begin{align}
\label{FS-reg1}
&\mathfrak{P}(t,x,y) \in \bigcap_{q>1} L_{2,q}(W;\mathbb{R}),\
t>0,\ \ {\rm \ uniformly\ in \ } x,y\in [0,\pi];\\
\label{FS-sym-pos}
&\mathfrak{P}(t,x,y)\geq 0,\ \ \mathfrak{P}(t,x,y)=\mathfrak{P}(t,y,x),
\ \ \ t>0,\ x,y\in [0,\pi];\\
\label{FS-reg2}
& \mathfrak{P}(\cdot,x,y) \in \mathcal{C}^{3/4-}((\delta, T)),
 0<\delta<T,\ \ x,y\in [0,\pi];\\
 \label{FS-reg3}
&\mathfrak{P}(t,\cdot;y) \in \mathcal{C}^{3/2-}((0,\pi)),\
t>0,\ y\in [0,\pi].
\end{align}
\end{theorem}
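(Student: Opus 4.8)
The plan is to exploit the fact that the defining formula \eqref{eq:ua-fund-1} for $\mathfrak{P}_{\ba}$ is \emph{identical} to the chaos-solution formula \eqref{eq:ua-gen-1}, except that the innermost initial factor $u_{\zm}(s_1,y_1)$ is replaced by the heat kernel $\hker(s_1,y_1,y)$; informally, $\mathfrak{P}$ is the chaos solution started from $u_0(\cdot)=\delta(\cdot-y)$, and \eqref{FCS-det} makes this precise. The crucial observation is that \eqref{eq:hker0} yields $\hker(s_1,y_1,y)\leq(1+\sqrt{t}\,)s_1^{-1/2}$ for $0<s_1\leq t$, which is exactly the same $s_1^{-1/2}$ singularity that $u_{\zm}$ contributed through \eqref{RF-2} and \eqref{Cpst} in the proofs of Theorems \ref{th:rf} and \ref{th:SpaceReg}. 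Consequently every Bessel-type estimate of Sections \ref{sec:RCS}--\ref{sec:Space} transfers to $\mathfrak{P}$ with $\|u_0\|_{L_p((0,\pi))}$ replaced by the constant $(1+\sqrt{t}\,)$. Since $y$ enters only through the innermost factor and $x$ only through the outermost kernel, both controlled uniformly by \eqref{eq:hker0}, the resulting bounds are automatically uniform in $x,y\in[0,\pi]$.

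For \eqref{FS-reg1} I would repeat the argument of Theorem \ref{th:rf} essentially verbatim: apply \eqref{mod-a-gen-tx} to $\mathfrak{P}_{\ba}$, integrate out $y_1,\dots,y_n$ using the semigroup property \eqref{semigr} and \eqref{eq:hker0}, and insert the bound above to obtain $\sum_{|\ba|=n}|\mathfrak{P}_{\ba}(t,x,y)|^2\leq n!\,(1+\sqrt{t}\,)^{2(n+1)}\,I_n^2(t;1/4,1/2)$. Then \eqref{ab-Int-1} produces a bound of the form $C^n(t)\,n^{-n/2}$, summable against $q^{n}$ for every $q>1$, which is \eqref{FS-reg1} together with its uniformity in $x,y$.

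Property \eqref{FS-sym-pos} is the one genuinely new point, since Theorem \ref{th:positivity} cannot be invoked directly because $\delta(\cdot-y)\notin L_p((0,\pi))$. Having \eqref{FS-reg1} in hand, so that $\mathfrak{P}(t,x,y)\in L_2(W;\bR)$, I would reuse the device from the proof of Theorem \ref{th:positivity}: set $V(t,x,y;h)=\bE\big(\mathfrak{P}(t,x,y)\exp(\dot{W}(h)-\tfrac12\|h\|_0^2)\big)=\sum_{\ba\in\cJ}h^{\ba}\mathfrak{P}_{\ba}(t,x,y)/\sqrt{\ba!}$, with $h^{\ba}=\prod_k h_k^{\alpha_k}$. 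By the propagator \eqref{eq:ppgA}, $V(\cdot,\cdot,y;h)$ solves $\partial_t V=\partial_{xx}V+h(x)V$ under Neumann conditions with $V(0,x,y;h)=\hker(0,x,y)=\delta(x-y)$, so it is the Neumann Green's function of $\partial_{xx}+h(x)$ with pole at $y$. Such a Green's function is nonnegative, by the parabolic maximum principle, and symmetric in $x,y$, because $\partial_{xx}+h$ is self-adjoint under Neumann conditions. Matching coefficients of $h^{\ba}$ gives $\mathfrak{P}_{\ba}(t,x,y)=\mathfrak{P}_{\ba}(t,y,x)$, and, exactly as at the end of the proof of Theorem \ref{th:positivity}, the density of $\{\exp(\dot{W}(h)-\tfrac12\|h\|_0^2)\}$ in $L_2(W;\bR)$ transfers nonnegativity to $\mathfrak{P}$, yielding \eqref{FS-sym-pos}.

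Finally, \eqref{FS-reg2} and \eqref{FS-reg3} follow by rerunning the proofs of Theorems \ref{th:TimeReg} and \ref{th:SpaceReg} with the same substitution. The only structural change is in the $|\ba|=0$ term, now $\hker(\cdot,x,y)$ in place of $u_{\zm}$: for fixed $x,y$ it is $\mathcal{C}^{\infty}$ in $x$ for every $t>0$ and $\mathcal{C}^{\infty}$ in $t$ on every interval $[\delta,T]$ with $\delta>0$, so it contributes no H\"older defect and in particular forces the restriction to $(\delta,T)$ in the time statement, since near $t=0$ the $|\ba|=0$ term inherits the singularity of the delta initial datum. For $|\ba|\geq1$, the two-term split \eqref{eq:Timedifferenceineq} and the spatial-increment estimate \eqref{sp-incr} carry over unchanged, with the innermost weight now $s_1^{-1/2}$ coming from $\hker(s_1,y_1,y)$, producing integrals of the type $I_n(t;1/4+\gamma,1/2)$ and $I_n(t;3/4+\gamma/2,1/2)$; both first arguments lie in $(0,1)$, so \eqref{ab-Int-1} applies, and the analogue of \eqref{t-incr} together with Proposition \ref{prop:LcInLp} and the Kolmogorov criterion (Theorem \ref{th:KCC}) yields the $h^{3/4-\varepsilon}$ and $h^{3/2-\varepsilon}$ increments, the latter after first checking that $\mathfrak{P}_x$ exists as a random field by the bound on $\int_0^{\pi}\hker_x(t,x,y)\hker_x(s,x,y)dy$ as in Section \ref{sec:Space}. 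I expect the symmetry and positivity step to be the main obstacle, as it is the only part not reducible to the earlier estimates; the regularity claims are essentially bookkeeping once the replacement $\hker(s_1,\cdot,y)\leq(1+\sqrt{t}\,)s_1^{-1/2}$ is in place.
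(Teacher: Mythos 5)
Your proposal is correct, and for \eqref{FS-reg1}, \eqref{FS-reg2}, and \eqref{FS-reg3} it is essentially the paper's own argument: replace $u_{\zm}(s_1,y_1)$ by $\hker(s_1,y_1,y)$, use \eqref{eq:hker0} and the semigroup property to reduce everything to the integrals $I_n(t;\alpha,\beta)$ of \eqref{ab-Int}, and invoke \eqref{ab-Int-1}, Proposition \ref{prop:LcInLp}, and Theorem \ref{th:KCC}; your observation that the $|\ba|=0$ term $\hker(t,x,y)$ is smooth for $t>0$ but blows up as $t\searrow 0$, forcing the interval $(\delta,T)$ in \eqref{FS-reg2}, matches the paper's closing remark. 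The one place you diverge is \eqref{FS-sym-pos}. The paper obtains positivity indirectly: it applies Theorem \ref{th:positivity} to $u(t,x)=\int_0^{\pi}\mathfrak{P}(t,x,y)u_0(y)\,dy$ for nonnegative $u_0$ via \eqref{FCS-det} (so your remark that the theorem ``cannot be invoked'' is only true of a literal application with $u_0=\delta$), and it gets symmetry purely algebraically, by using $\hker(t,x,y)=\hker(t,y,x)$ and a time-reversal change of variables in the iterated integrals of \eqref{eq:ua-fund-1} to show $\sum_{|\ba|=n}|\mathfrak{P}_{\ba}(t,x,y)-\mathfrak{P}_{\ba}(t,y,x)|^2=0$. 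You instead rerun the $S$-transform device directly on $\mathfrak{P}$, identifying $V(t,x,y;h)$ with the Neumann Green's function of $\partial_{xx}+h$, which delivers positivity (maximum principle) and symmetry (self-adjointness) in one stroke. Your route is self-contained and avoids the $u_0$-dependent null sets implicit in the paper's positivity argument, at the cost of having to justify the coefficient-matching step (linear independence of the monomials $h^{\ba}$) to recover symmetry of each $\mathfrak{P}_{\ba}$; the paper's change-of-variables argument gets that identity with no analysis at all. Both are sound.
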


\begin{proof}  Using \eqref{eq:hker0}, \eqref{mod-a-gen},
 \eqref{aux-time}, and \eqref{ab-Int-1},
\begin{equation*}
 \begin{split}
 &\sum_{|\boldsymbol{\alpha}|=n}
 |\mathfrak{P}_{\boldsymbol{\alpha}}(t,x,y)|^2
 \leq n!\int\limits_{(0,\pi)^n}
 \int\limits_{\bT^n_{0,t}}\int\limits_{\bT^n_{0,t}}
\Big(
\hker(t-s_n,x,y_n)\cdots \hker(s_{2}-s_1,y_{2},y_1)\hker(s_1,y_1,y)\\
&
\qquad
\times \hker(t-r_n,x,y_n)\cdots \hker(r_{2}-r_1,y_{2},y_1)\hker(r_1,y_1,y)
\Big)\,
 ds^n\, dr^n\  dy^n\\
 &\qquad
 \leq n! (1+\sqrt{t})^{2(n+1)} I_n^2\big(t;1/4,1/2\big)\leq \frac{C^n(t)}{n^{n/2}},
 \end{split}
 \end{equation*}
from which \eqref{FS-reg1} follows.

To establish \eqref{FS-sym-pos},
note that  \eqref{FCS-det} and Theorem \ref{th:positivity} imply $\mathfrak{P}\geq 0$,
whereas, by \eqref{mod-a-gen-tx}, using $\hker(t,x,y)=\hker(t,y,x)$ and
 a suitable change of the time variables in the integrals,
$$
\sum_{|\ba|=n}
 |\mathfrak{P}_{\boldsymbol{\alpha}}(t,x,y)-
 \mathfrak{P}_{\boldsymbol{\alpha}}(t,y,x)|^2=0, \ n\geq 1,
 $$
 which implies $\mathfrak{P}(t,x,y)=\mathfrak{P}(t,y,x)$.

To establish \eqref{FS-reg2} and \eqref{FS-reg3}, we compute,  for $n\geq 1$,
\begin{align*}
\sum_{|\boldsymbol{\alpha}|=n}&
 |\mathfrak{P}_{\boldsymbol{\alpha}}(t+h,x,y) -
 \mathfrak{P}_{\boldsymbol{\alpha}}(t,x,y)|^2
 \leq h^{2\gamma} C^n(t,\gamma)n^{-n/2},\ \gamma\in (0,3/4);
\\
&{\rm \ cf. \ \eqref{t-incr},\ \ and }\\
\sum_{|\boldsymbol{\alpha}|=n}&
 |\mathfrak{P}_{\boldsymbol{\alpha},x}(t,x+h,y)-
 \mathfrak{P}_{\boldsymbol{\alpha},x}(t,x,y)|^2
  \leq h^{2\gamma}C^n(t,\gamma)n^{-n/2},\ \gamma\in (0,1/2);\\
& {\rm \ cf. \ \eqref{sp-incr}. }
\end{align*}

Note that $\mathfrak{P}_{\zm}(t,x,y)=\hker(t,x,y)$ is infinitely
differentiable in $t$ and $x$ for $t>0$ but is unbounded as $t\searrow 0$;
cf. \eqref{eq:hker0}.

 \end{proof}

 Now we can give full justification of the reason why $\mathfrak{P}$
 is natural to call the fundamental chaos solution of equation \eqref{eq:main}.

 \begin{theorem}
 If  $u_0\in L_{2,q}\big(W;L_2((0,\pi))\big)$ for some $q>1$, then
the chaos solution of \eqref{eq:main} with initial condition
$u(0,x)=u_0(x)$ is
\bel{Gen-CS}
u(t,x)=\int_0^{\pi} \mathfrak{P}(t,x,y)\diamond u_{0}(y)dy,
\ee
and
\bel{Gen-CS-reg}
u(t,x) \in L_{2,p}(W;\bR)
\ee
for every $p<q$, $t>0$, and $x\in [0,\pi]$.
\end{theorem}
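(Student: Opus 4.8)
The plan is to treat the two assertions in turn. For the representation \eqref{Gen-CS}, I would write the right-hand side as $w(t,x)=\sum_{\ba}w_{\ba}(t,x)\xi_{\ba}$, so that by Definition \ref{def:WP} its coefficients are $w_{\ba}=\sum_{\bbt+\bg=\ba}(\ba!/(\bbt!\,\bg!))^{1/2}\int_0^{\pi}\mathfrak{P}_{\bbt}(t,x,y)u_{0,\bg}(y)\,dy$, and then show that $\{w_{\ba}\}$ solves the propagator \eqref{eq:ppgA} with the random data $w_{\ba}(0,\cdot)=u_{0,\ba}$; since that triangular system of heat equations has at most one solution, this forces $w=u$. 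The initial condition is immediate: in \eqref{eq:ua-fund-1} the simplex $\bT^n_{0,0}$ is empty, so $\mathfrak{P}_{\bbt}(0,\cdot)=0$ for $|\bbt|\ge1$ while $\mathfrak{P}_{\zm}(0,x,y)=\hker(0,x,y)=\delta(x-y)$, leaving only $\bbt=\zm,\ \bg=\ba$ and giving $w_{\ba}(0,x)=u_{0,\ba}(x)$.

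Differentiating $w_{\ba}$ in $t$ and inserting the propagator $\partial_t\mathfrak{P}_{\bbt}=\partial_x^2\mathfrak{P}_{\bbt}+\sum_j\sqrt{\beta_j}\,\mfk{m}_j\mathfrak{P}_{\bbt^-(j)}$ satisfied by the fundamental solution, the Laplacian term reproduces $\partial_x^2 w_{\ba}$, and it remains to match the noise terms against $\sum_k\sqrt{\alpha_k}\,\mfk{m}_k w_{\ba^-(k)}$. This reduces to the two multi-index identities $\sqrt{\beta_j}\,(\ba!/(\bbt!\,\bg!))^{1/2}=(\ba!/((\bbt^-(j))!\,\bg!))^{1/2}$ and $\sqrt{\alpha_k}\,((\ba^-(k))!/(\bbt!\,\bg!))^{1/2}=(\ba!/(\bbt!\,\bg!))^{1/2}$; after reindexing $\bbt=\bbt'+\bep(j)$ the two sides coincide. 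This part is routine bookkeeping, and I anticipate no real difficulty beyond care with the factorials.

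The substance is the integrability \eqref{Gen-CS-reg}. Fix $t>0$ and $x\in[0,\pi]$ and set $a_{\bbt}=\|\mathfrak{P}_{\bbt}(t,x,\cdot)\|_0$, $b_{\bg}=\|u_{0,\bg}\|_0$. Cauchy--Schwarz on the $L_2((0,\pi))$ pairing gives $|u_{\ba}(t,x)|\le\sum_{\bbt+\bg=\ba}(\ba!/(\bbt!\,\bg!))^{1/2}a_{\bbt}b_{\bg}$, so that $\sum_{\ba}p^{|\ba|}|u_{\ba}(t,x)|^2\le\|F\diamond G\|_{L_{2,p}(W;\bR)}^2$ for the scalar variables $F=\sum a_{\bbt}\xi_{\bbt}$ and $G=\sum b_{\bg}\xi_{\bg}$. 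Here $G\in L_{2,q}$, while $F\in\bigcap_{q'>1}L_{2,q'}$ by \eqref{FS-reg1}; moreover the proof of Theorem \ref{prop:FS} furnishes the super-exponential bound $\sum_{|\bbt|=n}a_{\bbt}^2\le C^n(t)\,n^{-n/2}$. Thus everything reduces to the bilinear Wick estimate: if $F\in\bigcap_{q'>1}L_{2,q'}$ and $G\in L_{2,q}$, then $F\diamond G\in L_{2,p}$ for every $p<q$, and \eqref{Gen-CS-reg} follows at once from the scalar majorization.

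Establishing this estimate is the main obstacle. I would decompose $F=\sum_j F^{(j)}$ into homogeneous chaos parts $F^{(j)}=\sum_{|\bbt|=j}a_{\bbt}\xi_{\bbt}$, estimate each homogeneous Wick multiplication separately using the identity $\sum_{|\bbt|=j,\,\bbt\le\ba}\binom{\ba}{\bbt}=\binom{|\ba|}{j}$ to count the contributing decompositions, and recombine the degrees $j$ by a weighted Cauchy--Schwarz that retains the $\ell^2$ structure. The genuine difficulty is twofold: the multinomial weights couple $\bbt$ and $\bg$ through $\bbt+\bg=\ba$, so no decoupling survives the sum over the infinitely many noise directions unless the joint structure is kept; and a crude bound $\binom{\ba}{\bbt}\le2^{|\ba|}$ delivers only $p<q/2$. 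To reach every $p<q$ one must exploit the fine frequency decay of the fundamental solution — the factor $e^{-(k-1)^2 s}$ in \eqref{eq:kernel1} damps the high-frequency modes $\mfk{m}_k$ entering $\mathfrak{P}_{\bbt}$ — which renders the direction sums summable and transfers the combinatorial loss onto the super-regular factor $F$ while keeping the effective weight on $G$ arbitrarily close to $q$.
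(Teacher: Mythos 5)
Your first half — the direct verification that the coefficients $w_{\ba}=\sum_{\bbt+\bg=\ba}(\ba!/(\bbt!\bg!))^{1/2}\int_0^\pi\mathfrak{P}_{\bbt}(t,x,y)u_{0,\bg}(y)\,dy$ solve the propagator with initial data $u_{0,\ba}$ — is correct, and the factorial identities you record do check out after the shift $\bbt=\bbt'+\bep(k)$. This is a legitimate, more self-contained alternative to the paper, which instead quotes \cite[Theorem 9.8]{LR_shir} for the case $u(0,\cdot)=f\,\xi_{\bbt}$ and concludes by linearity. Your reduction of \eqref{Gen-CS-reg} to a scalar bilinear Wick estimate, via $a_{\bbt}=\|\mathfrak{P}_{\bbt}(t,x,\cdot)\|_0$ and $b_{\bg}=\|u_{0,\bg}\|_0$ and term-by-term Cauchy--Schwarz in $y$, is also sound and is in the same spirit as the paper's appeal to \cite[Theorem 4.3(a)]{LRS}.

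The gap is that you do not prove the bilinear estimate you correctly identify as ``the substance,'' and the route you sketch for it would not work. Once you have passed to the norms $a_{\bbt}$, the frequency content of $\mathfrak{P}_{\bbt}$ is gone, so the final paragraph's plan to ``exploit the fine frequency decay'' of $e^{-(k-1)^2s}$ is not available at that stage --- and it is not needed. The estimate is purely combinatorial. Apply Cauchy--Schwarz to $(F\diamond G)_{\ba}$ with the splitting $(\ba!/(\bbt!\bg!))^{1/2}a_{\bbt}b_{\bg}=\big[(\ba!/(\bbt!\bg!))^{1/2}c^{|\bbt|/2}d^{|\bg|/2}\big]\cdot\big[c^{-|\bbt|/2}d^{-|\bg|/2}a_{\bbt}b_{\bg}\big]$ and use the exact identity
$$
\sum_{\bbt+\bg=\ba}\frac{\ba!}{\bbt!\,\bg!}\,c^{|\bbt|}d^{|\bg|}
=\prod_k\sum_{\beta_k+\gamma_k=\alpha_k}\binom{\alpha_k}{\beta_k}c^{\beta_k}d^{\gamma_k}=(c+d)^{|\ba|},
$$
which yields
$$
\sum_{\ba}p^{|\ba|}\big|(F\diamond G)_{\ba}\big|^2
\leq\Big(\sum_{\bbt}\big(\tfrac{p(c+d)}{c}\big)^{|\bbt|}a_{\bbt}^2\Big)
\Big(\sum_{\bg}\big(\tfrac{p(c+d)}{d}\big)^{|\bg|}b_{\bg}^2\Big).
$$
Choosing $d=pc/(q-p)$ makes the second weight exactly $q$ and the first equal to $p'=pq/(q-p)$, i.e. $1/p'+1/q=1/p$ --- precisely the exponents the paper feeds into \cite[Theorem 4.3(a)]{LRS}. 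Since \eqref{FS-reg1} (or the bound $\sum_{|\bbt|=n}a_{\bbt}^2\leq \pi\,C^n(t)n^{-n/2}$ from its proof) puts $F$ in $L_{2,p'}$ for every finite $p'$, this covers every $p<q$. Your assertion that elementary combinatorics caps out at $p<q/2$ stems from discarding the binomial structure via $\binom{\ba}{\bbt}\leq2^{|\ba|}$ before applying Cauchy--Schwarz; keeping the exact binomial sum inside the Cauchy--Schwarz weight eliminates that loss entirely.
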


\begin{proof}
 Let
$$
u_0(x)=\sum_{\ba\in \cJ} u_{0,\ba}(x)\xi_{\ba}
$$
be the chaos expansion of the initial condition.
By definition, the chaos solution of \eqref{eq:main} is
$$
u(t,x)=\sum_{\ba\in \cJ} u_{\ba}(t,x)\xi_{\ba},
$$
where
\begin{equation*}
 \begin{split}
 \frac{\partial {u}_{\zm}(t,x)}{\partial t}&=
 \frac{\partial^2{u}_{\zm}(t,x)}{\partial x^2} ,\
u_{\zm}(0,x)=u_{0,\zm}(x);\\
\frac{\partial{u}_{\ba}(t,x)}{\partial t}&=
\frac{\partial^2 u_{\ba}(t,x)}{\partial x^2}
+\sum_k \sqrt{\alpha_k}\
\mfk{m}_k(x)u_{\ba^-(k)}(t,x),\\
& u_{\ba}(0,x)=u_{0,\ba}(x),\ |\ba|>0.
\end{split}
\end{equation*}
By \cite[Theorem 9.8]{LR_shir},
if $u(0,x)=f(x)\xi_{\boldsymbol{\beta}}$ for
some $f\in {L_2((0,\pi))}$ and ${\boldsymbol{\beta}}\in \cJ$, then
$$
u(t,x)=\int_0^{\pi} \mathfrak{P}(t,x,y)\diamond \xi_{\boldsymbol{\beta}}
 f(y)dy.
$$
Then \eqref{Gen-CS} follows by linearity.

Next, given $1\leq p<q$, take $p'=qp/(q-p)$, so that $p'^{-1}+q^{-1}
=p^{-1}$. Then, by \eqref{FS-reg1}  and  \cite[Theorem 4.3(a)]{LRS},
$$
\mathfrak{P}(t,x,\cdot)\diamond u_{0}\in L_{2,p}\big(W;L_2((0,\pi))\big),
$$
which implies \eqref{Gen-CS-reg}.

\end{proof}

\section{Further Directions}
\label{sec:FD}

A natural question is whether the results of
Sections \ref{sec:CSol}--\ref{sec:FS} extend
to a more  general equation
$$
\frac{\partial u(t,x)}{\partial t} =
\mathcal{L} u(t,x)+
u(t,x)\diamond \dot{W}(x),\ t>0,\  \ x\in G,
$$
where $\mathcal{L}$ is a second-order linear ordinary differential operator
and $G\subseteq \bR$.

\subsection{Equation on a Bounded Interval}

Consider a second-order differential \\
operator
$$
f\mapsto \mathcal{L}f=\rho(x)f''+r(x)f'+c(x)f = (\rho f')'+(r-\rho')f'+cf,\ \rho>0,\ x\in (a,b),
$$
$-\infty<a<b<+\infty$.

A change of variables $f(x)=g(x)\exp\left(
-\int\frac{r(x)-\rho'(x)}{2\rho(x)}dx\right)$
leads to the symmetric operator
$$
\tilde{\mathcal{L}}g= (\rho g')'+\tilde{c} g,
$$
with
$$
\tilde{c}(x)= c(x)+ \frac{\rho(x) H''(x)+r(x)H'(x)}{H(x)},\ \
H(x)=\exp\left(-\int\frac{r(x)-\rho'(x)}{2\rho(x)}dx\right).
$$

The most general form of the (real) homogenous
boundary conditions for the
operator $\tilde{\mathcal{L}}$ is as follows:
\bel{BC-gen}
AY(a)+BY(b)=0,
\ee
where $A,B\in \bR^{2\times 2}$, $Y(x)=\big(g(x) \ \ \rho(x)g'(x)
\big)^{\top}$. If
the matrix $[A\ B] \in \bR^{2\times 4}$ has rank 2 and
\bel{eq:SABC}
AEA^{\top}=BEB^{\top},\ \
E=
\left(
\begin{array}{cc}
0 & -1 \\
1 & 0
\end{array}
\right),
\ee
then boundary conditions \eqref{BC-gen} allow a self-adjoint
extension of the operator $\tilde{\mathcal{L}}$ to $L_2((a,b))$;
cf. \cite[Section 4.2]{Zettl}.
Particular cases of \eqref{BC-gen} satisfying \eqref{eq:SABC}
are {\tt separated boundary conditions}
$$
c_1g(a)+c_2\rho(a)g'(a)=0,\ c_3g(b)+c_4\rho(b)g'(b)=0,\
$$
when both matrix products in \eqref{eq:SABC} are zero,
and {\tt periodic boundary conditions}
$$
g(a)=g(b), \ \  g'(a)=g'(b),
 $$
when $A=B$.

Consider the eigenvalue problem for $\tilde{\mathcal{L}}$:
$$
-\tilde{\mathcal{L}}\mfk{m}_k(x)=\lambda^2_k\mfk{m}_k,\
k=1,2,\ldots.
$$
The following properties of the eigenvalues $\lambda_k$
and the  eigenfunctions $\mfk{m}_k$ ensure that the results
of   Sections \ref{sec:CSol}--\ref{sec:FS} extend to equation
$$
u_t=\tilde{\mathcal{L}}u + u\diamond \dot{W}(x).
$$
\begin{itemize}
 \item[{[EVS]}] The eigenvalues $\lambda_k$ satisfy
  $$
  \lim_{k\to \infty} \frac{\lambda_k}{k^2}=\bar{\lambda}>0;
  $$
\item[{[CEF]}]
The set of normalized eigenfunctions $\{\mfk{m}_k,\ k\geq 1\}$
is complete in $L_2((a,b))$;
\item[{[MP]}] The kernel
$$
\hker(t,x,y)=\sum_{k=1}^{\infty}
e^{-\lambda_k^2t}\;\mfk{m}_k(x)\mfk{m}_k(y)
$$
of the  semi-group generated by $\tilde{\mathcal{L}}$
 is non-negative: $\hker(t,x,y)\geq 0$, $t>0,\ x,y\in (a,b)$;
 \item[{[UB]}] The eigenfunctions $\mfk{m}_k$ are uniformly
 bounded:
 $$
 \sup_{k\geq 1,\ x\in (a,b)} |\mfk{m}_k(x)|<\infty.
 $$
\end{itemize}
The corresponding computations, although not necessarily trivial, are essentially
equivalent to what was done in Sections \ref{sec:CSol}--\ref{sec:FS}.
In the case of additive noise,
 the results of \cite{Feller-1D} help with identification of the
diffusion process that, similar to the Brownian bridge in Remark
\ref{rem:compensate},  compensates the spatial derivative of the solution
 to a smooth function.

There are general sufficient conditions for [EVS] and
[CEF] to hold; cf. \cite[Theorems 4.3.1 and 4.6.2]{Zettl}.
The maximum principle [MP] means special restrictions on the
boundary conditions; these restrictions are, in general, not related to
\eqref{BC-gen}; cf. \cite[Theorem 12]{Feller-1D}.
Condition [UB] appears to be the most difficult to verify without additional information about the operator $\tilde{\mathcal{L}}$.

\subsection{Equation on the Line}

All the results of  Sections \ref{sec:CSol}--\ref{sec:FS} extend to the
chaos solution of the heat  equation
$$
u_t=u_{xx}+u\diamond \dot{W}(x),\ t>0,\ x\in \bR,
 $$
 with suitable initial condition $u(0,x)=u_0(x)$. In the case of additive noise,
  a two-sided Brownian motion compensates the space derivative of the
  solution to a smooth function.

 Further extensions, to the equation
 $$
 u_t=\mathcal{L}u + u\diamond \dot{W}(x),
 $$
 are also possible but might require additional effort.

 For example, let $\mathcal{L}f=\big(c(x)f'(x)\big)'$
  with a measurable function $c(x)$  such that
$$
c_1 \leq c(x) \leq c_2,\quad \mbox{ for all } x\in \mathbb{R},
$$
for some constants $c_1, c_2 >0$.
Then \cite[Theorems 4.1.11 and 4.2.9]{Stroock}
the kernel of the the corresponding semi-group satisfies
$$
 \frac{\mfk{c}_1}{\sqrt{t}}\exp
 \left(-\frac{(x-y)^2}{\mfk{a}_1\,t}\right)
 \leq \hker(t,x,y)=\hker(t,y,x)
  \leq \frac{\mfk{c}_2}{\sqrt{t}}\exp
 \left(-\frac{(x-y)^2}{\mfk{a}_2\,t}\right)
$$
for some positive numbers $\mfk{c}_1,\mfk{c}_2,\mfk{a}_1,\mfk{a}_2$,
which is enough to carry out the computations from Sections \ref{sec:CSol}
and \ref{sec:RCS}.
Additional regularity of the chaos solution requires more delicate bounds on
$\hker_x(t,x,y)$ and $\hker_t(t,x,y)$.


\def\cprime{$'$}
\providecommand{\bysame}{\leavevmode\hbox to3em{\hrulefill}\thinspace}
\providecommand{\MR}{\relax\ifhmode\unskip\space\fi MR }
\providecommand{\MRhref}[2]{%
  \href{http://www.ams.org/mathscinet-getitem?mr=#1}{#2}
}
\providecommand{\href}[2]{#2}

\end{document}